\numberwithin{equation}{section}
\newtheorem{thm}{Theorem}[section]
\newtheorem*{thm*}{Theorem}
\newtheorem{lem}{Lemma}[section]
\newcommand{\fincal}{\end{eqnarray*}}
\begin{document}
\title[EKGMP systems in closed manifolds]
{Static Klein-Gordon-Maxwell-Proca systems 
in $4$-dimensional closed manifolds}
\author{Emmanuel Hebey}
\address{E. Hebey, Universit\'e de Cergy-Pontoise, 
D\'epartement de Math\'ematiques, Site de 
Saint-Martin, 2 avenue Adolphe Chauvin, 
95302 Cergy-Pontoise cedex, 
France}
\email{Emmanuel.Hebey@math.u-cergy.fr}
\author{Trong Tuong Truong}
\address{T.T. Truong, Universit\'e de Cergy-Pontoise, 
D\'epartement de Physique, Site de 
Saint-Martin, 2 avenue Adolphe Chauvin, 
95302 Cergy-Pontoise cedex, 
France}
\email{tuong.truong@u-cergy.fr}
\thanks{To appear in J. Reine Angew. Math.}

\begin{abstract} We prove existence and uniform bounds  
for critical static Klein-Gordon-Maxwell-Proca systems in the case of $4$-dimensional 
closed Riemannian manifolds.  
\end{abstract}

\maketitle

Static Klein-Gordon-Maxwell-Proca systems are massive versions of the electrostatic 
Klein-Gordon-Maxwell Systems. The vector field in these systems inherits a mass and 
is governed by the Proca action 
which generalizes that of Maxwell. 
Klein-Gordon-Maxwell systems are intended to provide a dualistic model for the description of 
the interaction between a charged relativistic matter scalar field and the electromagnetic field that it generates. The electromagnetic field is 
both generated by and drives the 
particle field. In the electrostatic form of the Klein-Gordon-Maxwell systems, looking for standing waves $ue^{i\omega t}$,  
the matter field is characterized by the property that 
$u$, together with a gauge potential $v$, solve the electrostatic Klein-Gordon-Maxwell systems \eqref{SWSyst} with $m_1 = 0$. In the case 
of a closed manifold we discuss here 
the two equations in \eqref{SWSyst} are independent one of another when $m_1 = 0$ and the system reduces to the sole Schr\"odinger equation. The 
Proca formalism, for $m_1 > 0$, leads to a deeper phenomenon and is more appropriate to the closed case. 
The particle in this model interacts via the minimum coupling rule
\begin{equation}\label{MinSubsRule}
\partial_t \to \partial_t+iq\varphi\hskip.2cm\hbox{and}\hskip.2cm \nabla \to \nabla - iqA
\end{equation}
with an external massive vector field $(\varphi,A)$ which is governed by the Maxwell-Proca Lagrangian. 
The Proca action is a gauge-fixed version 
of the Stueckelberg action in the Higgs mechanism (see Goldhaber and Nieto \cite{GolNie2}, and 
Ruegg and Ruiz-Altaba \cite{RueRui}).
In the Proca formalism, developped under the influence of de Broglie, the photon inherits a nonzero  mass. This issue is of considerable importance 
and intensively studied in modern physics (see for instance Adelberger, Dvali and Gruzinov \cite{AdeDvaGru}, Byrne \cite{Byr}, 
Goldhaber and Nieto \cite{GolNie1,GolNie2}, Luo and Tu \cite{LuoTu}, Luo, Gillies and Tu \cite{LuoGilTu} 
 and the references in these papers). When $n = 3$, the KGMP equations consist 
in the nonlinear Klein-Gordon matter equation, the charge continuity equation and the massive modified 
Maxwell equations in SI units, which are hereafter explicitly written down:
\begin{equation}\label{MaxwellProca}
\begin{split}
&\nabla.E = \rho/\varepsilon_0 - \mu^2\varphi\hskip.1cm ,\\
&\nabla\times H = \mu_0\left(J+\varepsilon_0\frac{\partial E}{\partial t}\right) - \mu^2A\hskip.1cm ,\\
&\nabla\times E + \frac{\partial H}{\partial t} = 0\hskip.1cm\hbox{and}\hskip.1cm \nabla. H = 0\hskip.1cm .
\end{split}
\end{equation}
These massive Maxwell equations, as modified to Proca form, appear to have been first written in modern format by Schr\"odinger \cite{Sch}.  
The Proca formalism a priori breaks Gauge invariance. Gauge invariance can be restaured by the Stueckelberg trick, as pointed out by Pauli \cite{Pau}, and then by 
the Higgs mechanism. We refer to 
Goldhaber and Nieto \cite{GolNie2}, Luo, Gillies and Tu \cite{LuoGilTu}, and Ruegg and Ruiz-Altaba \cite{RueRui} for very complete 
references on the Proca approach.

\medskip In what follows we let $(M,g)$ be a smooth compact $3, 4$-dimensional Riemannian manifold. We let also $2^\star = \frac{2n}{n-2}$ 
be the critical Sobolev exponent, where $n$ is the dimension of $M$. Given real numbers $q > 0$, 
$m_0, m_1 > 0$, $\omega \in (-m_0,m_0)$, and $p \in (2,2^\star]$, the derivation of the Klein-Gordon-Maxwell-Proca system we investigate 
in this paper is written as
\begin{equation}\label{SWSyst}
\begin{cases}
\Delta_gu + m_0^2u = u^{p-1} + \omega^2\left(qv-1\right)^2u\\
\Delta_gv + \left(m_1^2+q^2u^2\right)v = qu^2\hskip.1cm ,
\end{cases}
\end{equation}
where $\Delta_g = -\hbox{div}_g\nabla$ is the Laplace-Beltrami operator. The system \eqref{SWSyst} 
corresponds to looking for standing waves $ue^{i\omega t}$ for the full KGMP system in the static case where the 
massive vector field $(\varphi, A)$ depends on the sole spatial variable. The system 
is energy critical when $n = 3$ and $p = 6$ and when 
$n = 4$ and $p = 4$. It is subcritical otherwise, namely when $n = 3$ and $p \in (2,6)$ or $n=4$ and  
$p \in (2,4)$. In the above model, $m_1$ is a coupling constant which 
makes that the two equations in \eqref{SWSyst} are trully coupled ($m_1$ is the Proca mass in the Maxwell-Proca formalism) while 
$m_0$ is the mass of the particle, $q$ is the charge of the particle, $u$ is the amplitude in the writing of the particle, 
$\omega$ is its temporal frequency (referred to as the phase in the sequel), and $v$ is the electric potential. 

\medskip Let $S_g$ stand for the scalar curvature of $g$, and $\mathcal{S}_p(\omega)$ be 
the set consisting of the positive smooth solutions $\mathcal{U} = (u,v)$ 
of \eqref{SWSyst} with phase $\omega$ and nonlinear term $u^{p-1}$. Namely,
\begin{equation}\label{DefSp}
\mathcal{S}_p(\omega) = \Bigl\{(u,v)\hskip.1cm\hbox{smooth}\hskip.1cm\hbox{s.t.}\hskip.1cm u > 0, v > 0,\hskip.1cm\hbox{and}\hskip.1cm (u,v)
\hskip.1cm\hbox{solve}\hskip.1cm\eqref{SWSyst}\Bigr\}\hskip.1cm .
\end{equation}
Given $\omega \in [0,m_0)$, we let
\begin{equation}\label{DefKOm0}
K_0(\omega) = (-m_0,-\omega]\bigcup [\omega,m_0)\hskip.1cm .
\end{equation}
When $\omega = 0$, 
$K_0(0) = (-m_0,m_0)$ is the full admissible phase range. 
For $\theta \in (0,1)$, and $\mathcal{U} = (u,v)$, we let 
$\Vert\mathcal{U}\Vert_{C^{2,\theta}} = \Vert u\Vert_{C^{2,\theta}} + \Vert v\Vert_{C^{2,\theta}}$. 
By a MPT solution we mean a solution with a strong mountain pass type structure. 
The following result was proved in Druet and Hebey \cite{DruHeb}. 

\begin{thm}[The $3$-dimensional case - Druet and Hebey \cite{DruHeb}]\label{Thm0} Let $(M,g)$ be a smooth compact $3$-dimensional Riemannian manifold 
$m_0, m_1 > 0$, $\omega \in (-m_0,m_0)$, and $p \in (2,6]$. When $p = 6$ assume
\begin{equation}\label{MainAssumpt0}
m_0^2 < \omega^2 + \frac{1}{8}S_g(x)
\end{equation}
for all $x \in M$. Then \eqref{SWSyst} possesses a smooth positive MPT solution. 
Moreover, for any $p \in (2,6)$, and any $\theta \in (0,1)$, there exists $C >0$ such that for any $\omega^\prime \in K_0(0)$, and 
any $\mathcal{U} \in \mathcal{S}_p(\omega^\prime)$, $\Vert\mathcal{U}\Vert_{C^{2,\theta}} \le C$, where $\mathcal{S}_p(\omega^\prime)$ 
is as in \eqref{DefSp} and $K_0(0)$ is as in \eqref{DefKOm0}. 
Assuming again \eqref{MainAssumpt0}, there also holds that for any $\theta \in (0,1)$, 
$\Vert\mathcal{U}\Vert_{C^{2,\theta}} \le C$ for all $\mathcal{U} \in \mathcal{S}_6(\omega^\prime)$ 
and all $\omega^\prime \in K_0(\omega)$, where $C > 0$ does not depend on $\omega^\prime$ and $\mathcal{U}$.
\end{thm}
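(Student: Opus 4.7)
\medskip\noindent\textbf{Proof plan.} The natural strategy is to decouple the system, apply a mountain pass argument for existence, and argue by contradiction via blow-up analysis for the uniform bounds.

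\emph{Reduction to a scalar equation.} For each smooth $u > 0$, the second equation in \eqref{SWSyst} is a linear elliptic equation in $v$ with strictly positive potential $m_1^2+q^2u^2$. By the maximum principle (the constant $1/q$ is a supersolution, since $m_1^2/q \ge 0$) it admits a unique smooth solution $v_u$ with $0 < v_u < 1/q$, and the map $u \mapsto v_u$ is smooth. Substituting into the first equation yields a single variational equation $\Delta_g u + m_0^2 u = u^{p-1} + \omega^2(qv_u-1)^2u$ on $M$, associated to a $C^1$ functional $I_\omega$ on $H^1(M)$ whose positive critical points correspond to elements of $\mathcal{S}_p(\omega)$. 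Note that the bound $v_u < 1/q$ is uniform in $u$ and $\omega$, so any subsequent blow-up must come from $u$.

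\emph{Existence.} The functional $I_\omega$ has mountain pass geometry: $\omega^2 < m_0^2$ combined with $0 < v_u < 1/q$ gives strict positivity of the quadratic part and a local minimum at the origin, while $p>2$ makes $I_\omega$ negative along a scaled ray. For subcritical $p \in (2,6)$ the embedding $H^1 \hookrightarrow L^p$ is compact, so Palais--Smale holds at every level and the mountain pass theorem combined with elliptic regularity and the maximum principle produces a smooth positive MPT solution. For the critical case $p=6$, Palais--Smale only holds below the Aubin threshold $\tfrac{1}{3}K_3^{-3}$, and one must show that the mountain pass level lies strictly below it. Testing $I_\omega$ on standard Aubin--Schoen bubbles $U_{\varepsilon,x_0}$ and expanding in $\varepsilon$, the correction to the Sobolev leading term is governed by the sign of $m_0^2 - \omega^2 - \tfrac{1}{8}S_g(x_0)$; assumption \eqref{MainAssumpt0} makes this negative somewhere, which yields the required strict inequality.

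\emph{Uniform bounds.} Suppose by contradiction that $(u_\alpha,v_\alpha) \in \mathcal{S}_p(\omega_\alpha')$ satisfy $\Vert(u_\alpha,v_\alpha)\Vert_{C^{2,\theta}} \to \infty$. Since $0 < v_\alpha < 1/q$ always, only $u_\alpha$ can blow up. Rescale at maximum points $x_\alpha$ of $u_\alpha$ at the natural scale $\mu_\alpha = \Vert u_\alpha\Vert_\infty^{-(p-2)/2}$, setting $\tilde u_\alpha(y) = \mu_\alpha^{2/(p-2)} u_\alpha(\exp_{x_\alpha}(\mu_\alpha y))$, and extract a positive limit $U$ on $\mathbb{R}^3$. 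In the subcritical range $p \in (2,6)$, both the mass term and the coupling $\omega^2(qv_\alpha-1)^2 u_\alpha$ scale away by a factor $\mu_\alpha^2 \to 0$, so $U$ satisfies $\Delta U = U^{p-1}$ on $\mathbb{R}^3$ with subcritical exponent, and the Gidas--Spruck Liouville theorem forces $U \equiv 0$, contradicting $U(0)=1$.

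\emph{Main obstacle.} The hard part is the critical case $p=6$, where the limit equation is the Yamabe problem on $\mathbb{R}^3$, which does admit positive bubble solutions, so the naive rescaling argument is insufficient. One must invoke the sharp $H^1$-decomposition theory for blowing-up sequences on closed manifolds: $u_\alpha$ is, asymptotically, a finite sum of rescaled standard bubbles plus a weak limit, and a local Pohozaev identity at each concentration point produces a balance whose dominant term is again $m_0^2 - \omega^2 - \tfrac{1}{8}S_g(x_\alpha)$. Under the bound $|\omega_\alpha'| \ge \omega$ from $\omega_\alpha' \in K_0(\omega)$, assumption \eqref{MainAssumpt0} contradicts the sign this balance must have, ruling out blow-up. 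The decoupling and subcritical estimates are essentially standard once the structure of the system is understood; it is the critical mountain-pass-level estimate in the presence of the coupling, and the refined blow-up analysis detecting the scalar curvature/phase balance, that require the full machinery.
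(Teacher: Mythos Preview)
The paper does not prove Theorem~\ref{Thm0}. It is quoted as a result of Druet and Hebey \cite{DruHeb} (see the sentence ``The following result was proved in Druet and Hebey \cite{DruHeb}'' immediately before the statement), and the remainder of the paper is devoted to the $4$-dimensional analogues, Theorems~\ref{Thm1} and~\ref{Thm2}. So there is no proof here for your proposal to be compared against; for the actual argument you must consult \cite{DruHeb}.

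That said, your outline is entirely consistent with the strategy the present paper carries out in dimension $4$, which is presumably modeled on \cite{DruHeb}: the reduction to a single scalar equation via the map $u\mapsto\Phi(u)$ (Lemma~\ref{Affine4Dim} here), the mountain-pass construction with the Aubin test functions \eqref{DefuEpsHighDim} and the expansion \eqref{TestComputaHighDim} to push the critical level below the Sobolev threshold, the Gidas--Spruck contradiction for the subcritical bounds (Section~\ref{Apriori1}), and the sharp pointwise estimates plus Pohozaev identity for the critical bounds (Lemmas~\ref{SharpEst} and~\ref{LemSharpAsypts}). Two remarks on your sketch: first, the nontrivial point in the critical existence step is not only the bubble expansion but also controlling the contribution of the coupling term $\int\Phi(u_\varepsilon)u_\varepsilon^2$, which must be shown to be lower order (cf.\ \eqref{Dim4KeyEst} in the $4$-dimensional argument); second, in the critical blow-up analysis the Pohozaev balance alone is not enough --- one needs the full pointwise control of $u_\alpha$ and an estimate on $v_\alpha$ in the blow-up region to show the coupling terms are negligible, which is where most of the work in Section~\ref{Apriori2} goes. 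Your sketch names these obstacles but does not address them.
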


This result exhibits phase compensation in the $3$-dimensional case. We aim in this paper in proving that a similar phenomenon 
holds true when $n = 4$. In this dimension the second equation in \eqref{SWSyst} becomes critical and this leads to 
serious difficulties. We prove below the existence of smooth positive MPT solutions and 
the existence of uniform bounds for \eqref{SWSyst} in the subcritical cases $p \in (2,4)$ without any conditions, 
and in the critical case $p =4$ 
assuming that the mass potential, balanced by the phase, 
is smaller than the geometric threshold potential of the conformal Laplacian. In doing so we prove that phase compensation still holds true for our systems 
when $n = 4$. 
Our result, in the subcritical case, is as follows.

\begin{thm}[The subcritical $4$-dimensional case]\label{Thm1} Let $(M,g)$ be a smooth compact $4$-dimensional Riemannian manifold, $q > 0$, 
$m_0, m_1 > 0$, $\omega \in (-m_0,m_0)$, and $p \in (2,4)$. 
Then \eqref{SWSyst} possesses a smooth positive MPT solution. Moreover, 
for any $\theta \in (0,1)$, there exists $C >0$ such that for any $\omega^\prime \in K_0(0)$, and 
any $\mathcal{U} \in \mathcal{S}_p(\omega^\prime)$, $\Vert\mathcal{U}\Vert_{C^{2,\theta}} \le C$, where $\mathcal{S}_p(\omega^\prime)$ 
is as in \eqref{DefSp} and $K_0(0)$ is as in \eqref{DefKOm0}.
\end{thm}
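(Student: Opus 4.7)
The plan is to treat existence and uniform bounds independently, essentially imitating the three-dimensional strategy of Druet--Hebey \cite{DruHeb}, with the subcritical hypothesis $p<2^\star=4$ (in dimension four) providing the compactness needed for each piece. Throughout I make essential use of the observation that solutions of the second equation are automatically controlled pointwise between $0$ and $1/q$, so the $\omega^2(qv-1)^2$ term in the first equation is a bounded perturbation of the mass.

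\textbf{Existence of a MPT solution.} I use the Benci--Fortunato reduction. For every fixed $u\in H^1(M)$, the operator $\Delta_g+m_1^2+q^2u^2$ is symmetric and uniformly coercive, hence has a unique inverse producing $V(u)\in H^1(M)$ that solves the second line of \eqref{SWSyst}; the map $V$ is smooth, and the maximum principle applied first to $V(u)$ and then to $1-qV(u)$ yields $0<V(u)<1/q$ whenever $u>0$. Solutions of \eqref{SWSyst} with $v=V(u)$ then correspond bijectively to critical points of the reduced functional
\[
I_{\omega'}(u)=\frac12\int_M\bigl(|\nabla u|^2+(m_0^2-(\omega')^2)u^2\bigr)\,dv_g+\frac{(\omega')^2}{2}\int_M qV(u)\,u^2\,dv_g-\frac1p\int_M(u^+)^p\,dv_g
\]
on $H^1(M)$. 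The hypothesis $|\omega'|<m_0$ makes the quadratic part coercive, producing the mountain-pass geometry together with the super-quadratic $u^p$ term. The embedding $H^1(M)\hookrightarrow L^p(M)$ is compact because $p<2^\star$, so the Palais--Smale condition holds; standard mountain-pass theory then produces a nontrivial critical point, whose positivity follows from the use of $u^+$ and the strong maximum principle, and elliptic bootstrap completes the smoothness of $(u,V(u))$.

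\textbf{Uniform $C^{2,\theta}$ bound.} I argue by contradiction. Suppose there exist $\omega'_\alpha\in K_0(0)$ and $\mathcal U_\alpha=(u_\alpha,v_\alpha)\in\mathcal S_p(\omega'_\alpha)$ with $\|\mathcal U_\alpha\|_{C^{2,\theta}}\to+\infty$. Applying the maximum principle to the second equation gives the phase-independent bounds $0<v_\alpha<1/q$, so the coefficient of $u_\alpha$ in the first equation, $h_\alpha:=m_0^2-(\omega'_\alpha)^2(qv_\alpha-1)^2$, satisfies $0<h_\alpha\le m_0^2$ uniformly. By Schauder estimates it is enough to prove a uniform $L^\infty$ bound on $u_\alpha$. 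Failing that, pick $x_\alpha$ with $u_\alpha(x_\alpha)=\|u_\alpha\|_\infty\to\infty$, set $\mu_\alpha:=u_\alpha(x_\alpha)^{-(p-2)/2}\to 0$, and rescale in a normal chart by $\tilde u_\alpha(y):=\mu_\alpha^{2/(p-2)}u_\alpha(\exp_{x_\alpha}(\mu_\alpha y))$. The rescaled equation reads
\[
\Delta_{g_\alpha}\tilde u_\alpha+\mu_\alpha^2\bigl(h_\alpha\circ\exp_{x_\alpha}(\mu_\alpha\cdot)\bigr)\tilde u_\alpha=\tilde u_\alpha^{p-1},
\]
with $g_\alpha\to\delta$ in $C^2_{loc}(\rn)$, the lower-order term tending uniformly to zero, and $\tilde u_\alpha\le\tilde u_\alpha(0)=1$ on expanding balls. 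Schauder estimates and a diagonal extraction provide a positive $C^2$ limit $\tilde u$ on $\rn$ solving $\Delta\tilde u=\tilde u^{p-1}$ with $\tilde u(0)=1$. Since $p-1<3=(n+2)/(n-2)$, the Gidas--Spruck--Caffarelli Liouville theorem forbids this, giving the contradiction. Hence $\|u_\alpha\|_\infty$ is uniformly bounded; the second equation then gives a uniform $L^\infty$ bound on $v_\alpha$, and Schauder upgrades to $C^{2,\theta}$ with a constant independent of $\omega'\in K_0(0)$.

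The main obstacle is less conceptual than organizational. One must verify that the Benci--Fortunato reduction is variationally consistent with the peculiar form of the coupling $\omega^2(qv-1)^2u$, and that in the blow-up analysis the coefficient $h_\alpha$, which degenerates in coercivity as $\omega'_\alpha\to\pm m_0$, never plays an adversarial role. The latter is precisely the phase compensation phenomenon already exploited in \cite{DruHeb}; the crucial point is that in the subcritical regime $p<2^\star$ the Gidas--Spruck obstruction closes the argument regardless of how weakly $h_\alpha$ is controlled from below, which is why no additional curvature or mass assumption is required in this range.
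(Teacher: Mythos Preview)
Your proposal is correct and follows essentially the same approach as the paper: the existence part via the Benci--Fortunato reduction and mountain pass, and the uniform bounds via rescaling at a maximum point and the Gidas--Spruck Liouville theorem, are exactly what the paper does. The only point where the paper is materially more careful is the construction and regularity of the map $u\mapsto V(u)$: in dimension~$4$ the defining equation $\Delta_g V + (m_1^2+q^2u^2)V = qu^2$ is itself \emph{critical} (the term $u^2V$ with $u\in H^1$, $V\in H^1\subset L^4$ only lies in $L^{4/3}$), so your assertion that the operator ``has a unique inverse'' and that ``the map $V$ is smooth'' needs justification; the paper handles this in a separate lemma (its Lemma~2.1), obtaining only that $V$ is locally Lipschitz and differentiable and that the reduced functional is $C^1$, which is all that the mountain pass argument requires.
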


In the critical case we prove the following result. The geometry of the ambiant inhomogeneous space, through the scalar 
curvature of $g$, comes to play a role as in the $3$-dimensional case. However, the result now turns out to be local in its existence 
part. 

\begin{thm}[The critical $4$-dimensional case]\label{Thm2} Let $(M,g)$ be a smooth compact $4$-dimensional Riemannian manifold, $q > 0$, 
$m_0, m_1 > 0$, $\omega \in (-m_0,m_0)$, and $p=4$. Assume
\begin{equation}\label{MainAssumpt}
m_0^2 < \omega^2 + \frac{1}{6}S_g(x)
\end{equation}
for some $x \in M$. Then \eqref{SWSyst} possesses a smooth positive MPT solution. 
Assuming that \eqref{MainAssumpt} holds true for all $x \in M$ 
there also holds that for any $\theta \in (0,1)$, 
$\Vert\mathcal{U}\Vert_{C^{2,\theta}} \le C$ for all $\mathcal{U} \in \mathcal{S}_4(\omega^\prime)$ 
and all $\omega^\prime \in K_0(\omega)$, where $C > 0$ does not depend on $\omega^\prime$ and $\mathcal{U}$, 
$\mathcal{S}_4(\omega^\prime)$ 
is as in \eqref{DefSp}, and $K_0(\omega)$ is as in \eqref{DefKOm0}.
\end{thm}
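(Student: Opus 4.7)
The plan is to reduce \eqref{SWSyst} to a scalar variational problem for $u$, establish existence via the mountain pass theorem with Aubin-type test functions, and prove the uniform bounds by a blow-up analysis driven by a localized Pohozaev identity.

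For any positive $u\in H^1(M)$, the operator $\Delta_g+m_1^2+q^2u^2$ is coercive, so the second equation of \eqref{SWSyst} uniquely determines a smooth positive $v_u$; substituting this back, solutions of \eqref{SWSyst} correspond to critical points of a scalar functional $I_{p,\omega}$ whose dominant quadratic part $\int_M(|\nabla u|^2+(m_0^2-\omega^2)u^2)\,dv_g$ is coercive since $|\omega|<m_0$. The mountain pass geometry is then standard; the delicate step is to prove that the mountain pass level lies strictly below the critical threshold $\tfrac{1}{4}K_4^{-4}$ attached to the sharp Euclidean embedding $H^1\hookrightarrow L^4$. Evaluating $I_{4,\omega}$ along the standard bubbles $u_\varepsilon(y)=\eta(y)\,\varepsilon/(\varepsilon^2+d_g(x_0,y)^2)$ centered at a point $x_0$ where \eqref{MainAssumpt} holds, the dimension-four expansion produces a logarithmic correction of the form $-\bigl(m_0^2-\omega^2-\tfrac{1}{6}S_g(x_0)\bigr)\varepsilon^2|\log\varepsilon|+O(\varepsilon^2)$, which under \eqref{MainAssumpt} is negative and thus brings the level below the threshold. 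A Struwe-type decomposition then rules out bubbling along a Palais--Smale sequence at this level, and elliptic regularity yields a smooth positive solution.

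For the uniform bound I would argue by contradiction and take $(\mathcal{U}_\alpha)=(u_\alpha,v_\alpha)\in\mathcal{S}_4(\omega_\alpha')$ with $\omega_\alpha'\in K_0(\omega)$ and $\|\mathcal{U}_\alpha\|_{C^{2,\theta}}\to\infty$. A Brezis--Kato bootstrap on the second equation, combined with the uniform $H^1$ bound on $u_\alpha$ that one obtains from the first equation by testing against $u_\alpha$, produces a uniform $L^\infty$ estimate on $v_\alpha$; hence the blow-up must take place in $u_\alpha$. The standard $H^1$-decomposition for critical elliptic equations then yields a finite family of concentration points $x_{i,\alpha}\to x_i$ and scales $\mu_{i,\alpha}\to 0$ for which $u_\alpha$ splits into its weak $H^1$ limit plus standard Aubin--Talenti bubbles centered at the $x_{i,\alpha}$, with $v_\alpha$ converging in $C^1_{\mathrm{loc}}$ away from the blow-up set.

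The heart of the argument is then a localized Pohozaev identity on geodesic balls $B_{x_{i,\alpha}}(r)$. Matching boundary terms against the explicit Aubin--Talenti profile and expanding the interior integrals in dimension $4$, a careful tracking of the coupling with $v_\alpha$ yields at leading order the relation
\begin{equation*}
\bigl(m_0^2-\omega^2-\tfrac{1}{6}S_g(x_i)\bigr)\,\mu_{i,\alpha}^2|\log\mu_{i,\alpha}|\,\bigl(C_0+o(1)\bigr)\to 0,
\end{equation*}
with $C_0>0$; the coupling contribution collapses to $\omega^2$ at each concentration point because the bubble part of the source $qu_\alpha^2$ in the second equation has vanishing $L^1$-mass at the bubble scale, forcing $qv_\alpha-1\to -1$ there. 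This forces $m_0^2\geq\omega^2+\tfrac{1}{6}S_g(x_i)$, contradicting the global form of \eqref{MainAssumpt}. I expect the main obstacle to be precisely the criticality of the second equation in dimension $4$: unlike the three-dimensional setting of Theorem \ref{Thm0}, the source $qu_\alpha^2$ is now at the Sobolev borderline, so the uniform control of $v_\alpha$ and the identification of its behavior at the blow-up points cannot be handled as a preliminary step but must be intertwined with the concentration analysis of $u_\alpha$ itself.
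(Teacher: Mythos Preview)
Your existence sketch is broadly right and close in spirit to the paper's, though the paper proceeds by subcritical approximation (proving $c_p\le\frac{1}{4K_4^4}-\delta_0$ for all $p$ near $4$ and passing to the limit) rather than by a direct Struwe argument at $p=4$. The key technical point you would still need is the paper's estimate $\int_M\Phi(tu_\varepsilon)u_\varepsilon^2\,dv_g=O(\varepsilon^2)$, which ensures the coupling term does not interfere with the $\varepsilon^2\vert\ln\varepsilon\vert$ expansion.

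The uniform-bound argument has a genuine gap. You assert a uniform $H^1$ bound on $u_\alpha$ ``from the first equation by testing against $u_\alpha$'', but that identity reads $\int_M\vert\nabla u_\alpha\vert^2+h_\alpha u_\alpha^2=\int_M u_\alpha^4$ and yields no bound whatsoever: the set $\mathcal{S}_4(\omega')$ contains \emph{all} positive solutions, not only mountain-pass ones, and there is no a priori energy control. Without an $H^1$ bound the Struwe decomposition is unavailable and your ``finite family of concentration points'' cannot be asserted. The paper avoids this entirely through the pointwise selection scheme of Druet--Hebey--V\'etois: one produces critical points $x_{i,\alpha}$ with $\bigl(\min_i d_g(x_{i,\alpha},x)\bigr)u_\alpha(x)\le C$ and runs the analysis on each isolated bubble domain via a radius $r_\alpha$ defined from the spherical average, with no reference to global energy. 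Your control of $v_\alpha$ at the blow-up points is also too soft: vanishing $L^1$-mass of $qu_\alpha^2$ does not force $v_\alpha\to 0$, since the second equation is itself critical in dimension $4$. The paper supplies two ingredients you are missing: a global comparison $v_\alpha\le Cu_\alpha^\beta$ (obtained by maximizing $v_\alpha/u_\alpha^\beta$ with $\beta$ chosen so that $\beta m_0^2<m_1^2$), and a Dirichlet decomposition $v_\alpha=w_{1,\alpha}+w_{2,\alpha}$ on $B_{x_\alpha}(\tfrac{5}{2}r_\alpha)$ into a harmonic part controlled by boundary data (hence by $u_\alpha^\beta$ there) and a Poisson part bounded pointwise via Green's function by $C\mu_\alpha^2\ln\bigl(2+d_g(x_\alpha,x)^2/\mu_\alpha^2\bigr)/\bigl(\mu_\alpha^2+d_g(x_\alpha,x)^2\bigr)$. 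These feed into the Pohozaev remainder and, after a case split on whether $r_\alpha^2\ln(r_\alpha/\mu_\alpha)\to 0$, give the sign condition $\mathcal{H}(0)\le 0$ on the local singular limit that produces the contradiction.
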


There are two consequences to Theorem \ref{Thm2}. We list them in points (i)-(ii) below. 
In point (i) 
we illustrate the phase compensation effect associated with \eqref{SWSyst}. There we always get 
existence and a priori bounds for all phases $\omega$ which are close to $m_0$. Point (ii) concerns the full range of phases 
when we assume $m_0$ is not too large. 

\medskip (i) Phase compensation in the critical case - {\it Assume $p = 4$ and $S_g > 0$ in $M$. 
Then there exists $\varepsilon > 0$ such that for any $m_0 -\varepsilon < \vert\omega\vert < m_0$, 
\eqref{SWSyst} possesses a smooth positive MPT solution. Moreover, for any $\theta \in (0,1)$, there exists $C > 0$ such that 
$\Vert\mathcal{U}\Vert_{C^{2,\theta}} \le C$ for all $\mathcal{U} \in \mathcal{S}_4(\omega)$ 
and all $m_0 -\varepsilon < \vert\omega\vert < m_0$.}

\medskip (iii) Full phase range in the critical case - {\it Assume $p = 4$ and $m_0^2 < \frac{1}{6}S_g$ in $M$. 
For any $\omega \in (-m_0,m_0)$, 
\eqref{SWSyst} possesses a smooth positive MPT solution. Moreover, for any $\theta \in (0,1)$, there exists $C > 0$ such that 
$\Vert\mathcal{U}\Vert_{C^{2,\theta}} \le C$ for all $\mathcal{U} \in \mathcal{S}_4(\omega)$ 
and all $\omega \in (-m_0,m_0)$.}

\medskip As an immediate consequence of the $C^{2,\theta}$-bounds in the above results we obtain phase stability for standing waves of 
the Klein-Gordon-Maxwell-Proca equations in electrostatic form. Standing waves for the Klein-Gordon-Maxwell-Proca equations in 
electrostatic form are written as $S = ue^{i\omega t}$ and they are coupled with a gauge potential $v$, where $(u,v)$ solves 
\eqref{SWSyst}. Roughly speaking, phase stability means that for any arbitrary sequence of standing waves 
$u_\alpha e^{i\omega_\alpha t}$, with gauge potentials $v_\alpha$, the convergence of the phases $\omega_\alpha$ in $\mathbb{R}$ 
implies the convergence of the amplitudes $u_\alpha$ and of the gauge potentials $v_\alpha$ in the $C^2$-topology. Phase stability 
prevents the existence of arbitrarily large amplitude standing waves.

\medskip High dimensional KGM systems in Coulomb gauge have been recently investigated by Rodnianski and Tao 
\cite{RodTao} and with special emphasis in $(1+4)$-dimensions by Klainerman and Tataru 
\cite{KlaTat} and Selberg \cite{Sel}. Electrostatic KGM systems in the three dimensional case have been investigated 
by several authors. Possible references on the physics side are by Benci and Fortunato \cite{BenFor0,BenFor}, Long \cite{Lon}, Long and Stuart \cite{LonStu}.
Blowing-up solutions to the electrostatic Schr\"odinger-Maxwell system, a cousin of the electrostaic KGM type systems 
that we consider here, have been constructed in D'Aprile and Wei \cite{AprWei1,AprWei2}.

\medskip We briefly discuss in Section \ref{PhysSec} the physics relevance of \eqref{SWSyst}. We prove our theorem in Sections 
\ref{ExistTheory} to \ref{Apriori2}. The existence part in the theorem is proved in Section \ref{ExistTheory}. The $C^{2,\theta}$-bound 
in the subcritical case is established in Section \ref{Apriori1}. The more delicate $C^{2,\theta}$-bound in the critical case is 
established in Sections \ref{Apriori2} . 
The phase compensation phenomenon in the theorem holds true thanks to the $4$-dimensional log effect $\mu^2 = o(\mu^2\log\mu)$ as 
$\mu \to 0$. 

\section{The physics origin of the system}\label{PhysSec}

The Klein-Gordon-Maxwell-Proca system
discussed in this work describes 
an interacting field theory model in theoretical physics. Most
electromagnetic phenomena are described by conventional
electrodynamics, which is a theory of the coupling of
electromagnetic fields to matter fields. Of prime importance for
particle physics is fermion electrodynamics in which matter is
represented by spinor fields. However one may have also boson
electrodynamics in which matter is described by integer spin or bosonic fields.
The simplest one is of course the complex scalar
field, describing spinless particles having electric charges $\pm q$. It gives rise to scalar electrodynamics, which describes in the
non-relativistic limit the superconductivity of metals at very low
temperatures. In the more general context of particle physics, a complex scalar
field $\psi$ may serve to describe scalar mesons in nuclear
matter interacting via a massive vector boson field $(\varphi,A)$. 

\medskip The interaction in this model is described by the minimum 
substitution rule \eqref{MinSubsRule} in a nonlinear Klein-Gordon Lagrangian. As for the external massive vector 
field it is governed by the Maxwell-Proca Lagrangian. 
More precisely, assuming for short that the manifold is orientable, we define the Lagrangian densities 
$\mathcal{L}_{NKG}$ and $\mathcal{L}_{MP}$ of $\psi$, $\varphi$, and $A$ by
\begin{equation}\label{EqtLagAct1}
\begin{split}
&\mathcal{L}_{NKG}(\psi,\varphi,A) = \frac{1}{2}\left\vert(\frac{\partial}{\partial t} + iq\varphi)\psi\right\vert^2 - \frac{1}{2}\left\vert(\nabla - iqA)\psi\right\vert^2
+ \frac{m_0^2}{2}\vert\psi\vert^2 - \frac{1}{p}\vert\psi\vert^{p}\hskip.1cm,\\
&\mathcal{L}_{MP}(\varphi,A) = \frac{1}{2}\left\vert\frac{\partial A}{\partial t} + \nabla\varphi\right\vert^2 - \frac{1}{2}\vert\nabla\times A\vert^2 
+ \frac{m_1^2}{2}\vert\varphi\vert^2 - \frac{m_1^2}{2}\vert A\vert^2\hskip.1cm ,
\end{split}
\end{equation}
where $\nabla\times = \star d$, $\star$ is the Hodge dual, $\psi$ represents 
the matter complex scalar field, $m_0$ its mass, $q$ its charge, $(\varphi,A)$ the electromagnetic vector field, and $m_1$ its mass. 
It can be noted that $\Vert(\varphi,A)\Vert_L^2 = \vert\varphi\vert^2-\vert A\vert^2$ is the square of the Lorentz norm of $(\varphi,A)$ with respect to the 
Lorentz metric $\hbox{diag}(1,-1,\dots,-1)$. 
The total action functional for $\psi$, $\phi$, and $A$ is then given by
\begin{equation}\label{EqtLagAct2}
\mathcal{S}(\psi,\varphi,A) = \int\int \left(\mathcal{L}_{NKG} + \mathcal{L}_{MP}\right) dv_gdt\hskip.1cm .
\end{equation}
Writing $\psi$ in polar form as $\psi(x,t) = u(x,t)e^{iS(x,t)}$, 
taking the variation of $\mathcal{S}$ with respect to $u$, $S$, $\varphi$, and $A$,
we get four equations which are written as
\begin{equation}\label{EqtLagAct5}
\begin{cases}
\frac{\partial^2u}{\partial t^2} + \Delta_gu + m_0^2u = u^{p-1} + \left(\left(\frac{\partial S}{\partial t} + q\varphi\right)^2 - \vert\nabla S-qA\vert^2\right)u\\
\frac{\partial}{\partial t}\left(\left(\frac{\partial S}{\partial t} + q\varphi\right)u^2\right) - \nabla.\left(\left(\nabla S - qA\right)u^2\right) = 0\\
-\nabla.\left(\frac{\partial A}{\partial t} + \nabla\varphi\right) + m_1^2\varphi + q\left(\frac{\partial S}{\partial t} + q\varphi\right)u^2 = 0\\
\overline{\Delta}_gA + \frac{\partial}{\partial t}\left(\frac{\partial A}{\partial t} + \nabla\varphi\right) + m_1^2A = q\left(\nabla S - qA\right)u^2\hskip.1cm ,
\end{cases}
\end{equation}
where $\Delta_g = -\hbox{div}_g\nabla$ is the Laplace-Beltrami operator, 
$\overline{\Delta}_g = \delta d$ is half the Laplacian acting on forms, and $\delta$ is the codifferential. We refer to this system 
as a nonlinear Klein-Gordon-Maxwell-Proca system. When $n = 3$, 
$\overline{\Delta}_gA = \nabla\times(\nabla\times A)$ and if we let
\begin{equation}\label{EqtLagAct6}
\begin{split}
&E = - \left(\frac{\partial A}{\partial t} + \nabla\varphi\right)\hskip.1cm ,\hskip.1cm H = \nabla\times A\hskip.1cm ,\\
&\rho = - \left(\frac{\partial S}{\partial t} + q\varphi\right)qu^2
\hskip.1cm ,\hskip.1cm\hbox{and}\hskip.1cm j = \left(\nabla S - qA\right)qu^2\hskip.1cm ,
\end{split}
\end{equation}
then the two last equations in \eqref{EqtLagAct5} give rise to the first pair of the Maxwell-Proca equations 
\eqref{MaxwellProca} with $\epsilon_0 = \mu_0 = 1$ (units are chosen such that $c = 1$) 
and $\mu^2 = m_1^2$, while the two first equations in \eqref{EqtLagAct6} give rise to the second pair of the 
equations. The first equation in \eqref{EqtLagAct5} gives rise to the nonlinear Klein-Gordon matter equation. The 
second equation in \eqref{EqtLagAct5} gives rise to the charge continuity equation
$\frac{\partial\rho}{\partial t} + \nabla.j = 0$ which, thanks to \eqref{MaxwellProca}, is equivalent to the Lorentz condition
$\nabla.A + \frac{\partial\varphi}{\partial t} = 0$.

\medskip We assume in what follows that $u(x,t) = u(x)$ does not depend on $t$, $S(x,t) = \omega t$ does not depend on $x$, 
and $\varphi(x,t) = \varphi(x)$, $A(x,t) = A(x)$ do not depend on $t$. In other words, we look for standing waves solutions of 
\eqref{EqtLagAct5} and assume that we are in the static case of the system where $(\varphi,A)$ depends on the sole 
spatial variable. By the fourth equation in \eqref{EqtLagAct5} 
we then get that
$$\overline{\Delta}_gA + (q^2u^2+m_1^2)A = 0\hskip.1cm .$$
This clearly implies that, and is equivalent to, $A \equiv 0$ since 
$$\int (\overline{\Delta}_gA,A) = \int\vert dA\vert^2\hskip.1cm .$$
As a remark, assuming that 
$A \equiv 0$, the Lorentz condition for the external Proca field $(\varphi,A)$ would make $\varphi$ dependent on the sole spatial variables. 
 As for the second equation 
in \eqref{EqtLagAct5} it reduces to $\frac{\partial^2S}{\partial t^2} = 0$. It is automatically satisfied when $S(t) = \omega t$, 
and we are thus left with the 
first and third equations in \eqref{EqtLagAct5}. Letting $S = -\omega t$, and $\varphi = \omega v$, these equations are rewritten 
as
\begin{equation}\label{EqtLagAct11}
\begin{cases}
\Delta_gu + m_0^2u = u^{p-1} + (q\varphi-\omega)^2u    \\
\Delta_g\varphi + m_1^2\varphi + q(q\varphi-\omega)u^2 = 0\hskip.1cm .
\end{cases}
\end{equation}
In particular, letting $\varphi = \omega v$, in \eqref{EqtLagAct11}, we recover our original system \eqref{SWSyst}. 
In other words, our original system \eqref{SWSyst} corresponds to looking for standing waves solutions of 
the Klein-Gordon-Maxwell-Proca system \eqref{EqtLagAct5} in static form.

\section{Existence Theory}\label{ExistTheory}

We prove the existence part in Theorems \ref{Thm1} and \ref{Thm2} and look for solutions with a 
special variational structure. Formally, solutions of \eqref{SWSyst} are critical points of the functional $S$ defined by
\begin{equation}\label{FullFunct}
\begin{split}
S(u,v) & = \frac{1}{2}\int_M\vert\nabla u\vert^2dv_g - \frac{\omega^2}{2}\int_M\vert\nabla v\vert^2dv_g + \frac{m_0^2}{2}\int_Mu^2dv_g\\
&-\frac{\omega^2m_1^2}{2}\int_Mv^2dv_g- \frac{1}{p}\int_Mu^pdv_g - \frac{\omega^2}{2}\int_Mu^2(1-qv)^2dv_g\hskip.1cm .
\end{split}
\end{equation}
The functional $S$ is strongly indefinite because of the competition between $u$ and $v$. 
Following a very nice idea going back to Benci-Fortunato \cite{BenFor0}, 
we introduce the auxiliary functional $\Phi$ given by 
\begin{equation}\label{DefPhi}
\Delta_g\Phi(u) + (m_1^2+q^2u^2)\Phi(u) = qu^2
\hskip.1cm ,
\end{equation}
and then consider that $u$ in \eqref{SWSyst} can be seen as a critical point of
\begin{equation}\label{DefFct}
\begin{split}
I_p(u) &= \frac{1}{2}\int_M\vert\nabla u\vert^2dv_g + \frac{m_0^2}{2}\int_Mu^2dv_g - \frac{1}{p}\int_M(u^+)^pdv_g\\
&- \frac{\omega^2}{2}\int_M\left(1-q\Phi(u)\right)u^2dv_g\hskip.1cm ,
\end{split}
\end{equation}
where $u^+ = \max(u,0)$. We explicitly define MPT solutions to be solutions we obtain from $I_p$ by the mountain 
pass lemma from $0$ to $u_1$ with $\Vert u_1\Vert_{L^p}^p$ being as large as we want with respect to $\Vert u_1\Vert_{H^1}^2$. 
Let $\Psi: H^1(M) \to \mathbb{R}$ be defined by
\begin{equation}\label{DefPsi}
\Psi(u) = \frac{1}{2}\int_M\left(1-q\Phi(u)\right)u^2dv_g\hskip.1cm .
\end{equation}
The following lemma establishes the existence and differentiability of $\Phi$, as well as the $C^1$-smoothness of $\Psi$. 
Equation \eqref{DefPhi} is critical when $n = 4$ 
because of the term $u^2\Phi(u)$.

\begin{lem}\label{Affine4Dim} Let $(M,g)$ be a smooth compact Riemannian $4$-manifold and $q > 0$. There 
exists $\Phi: H^1(M) \to H^1(M)$ such that \eqref{DefPhi} holds true and 
$0 \le \Phi(u) \le \frac{1}{q}$ for all $u \in H^1(M)$. Moreover, $\Phi$ is locally Lipschitz and differentiable. Its differential $D\Phi(u) = V_u$ at $u$ is given by
\begin{equation}\label{EqtDiffPhiLem1}
\Delta_g V_u(\varphi) + (m_1^2+q^2u^2)V_u(\varphi) = 2qu\left(1-q\Phi(u)\right)\varphi
\end{equation}
for all $\varphi \in H^1(M)$. The functional $\Psi: H^1(M) \to \mathbb{R}$ defined in \eqref{DefPsi}
is $C^1$ in $H^1(M)$ and 
\begin{equation}\label{DiffPsi}
D\Psi(u).(\varphi) = \int_M\left(1-q\Phi(u)\right)^2u\varphi dv_g
\end{equation}
for all $u, \varphi \in H^1(M)$.
\end{lem}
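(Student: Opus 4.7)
The plan is to treat \eqref{DefPhi}, for fixed $u\in H^1(M)$, as a linear elliptic equation in the unknown $\Phi$ and to build $\Phi(u)$ by Lax--Milgram; then to derive the pointwise bounds via the weak maximum principle; and finally to bootstrap regularity of $\Phi$ and $\Psi$ by differentiating the defining relation.

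For step one, fix $u\in H^1(M)$ and consider the bilinear form
\begin{equation*}
a_u(\Phi,\psi)=\int_M\nabla\Phi\cdot\nabla\psi\, dv_g+\int_M\bigl(m_1^2+q^2u^2\bigr)\Phi\psi\, dv_g
\end{equation*}
on $H^1(M)\times H^1(M)$. The critical $4$-dimensional Sobolev embedding $H^1(M)\hookrightarrow L^4(M)$ makes $u^2\in L^2(M)$, so $a_u$ is bounded on $H^1\times H^1$, and the positivity of $m_1^2$ yields coercivity with constant $\min(1,m_1^2)$ independent of $u$. The linear form $\psi\mapsto\int_Mqu^2\psi\, dv_g$ is continuous on $H^1(M)$ for the same Sobolev reason. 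Lax--Milgram then provides a unique $\Phi(u)\in H^1(M)$ solving \eqref{DefPhi} weakly. This is precisely the spot where the "critical in $n=4$" remark bites: the bilinear form is only borderline bounded because $u^2\Phi\psi$ lives in $L^1$ by a triple H\"older on $L^2\times L^4\times L^4$, but the mass term $m_1^2>0$ is enough to close the argument.

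For the pointwise bounds, I use the weak maximum principle. Testing \eqref{DefPhi} against $\Phi(u)^{-}:=\min(\Phi(u),0)$ one gets
\begin{equation*}
\int_M|\nabla\Phi(u)^{-}|^2dv_g+\int_M(m_1^2+q^2u^2)(\Phi(u)^{-})^2dv_g=\int_Mqu^2\Phi(u)^{-}dv_g\le 0,
\end{equation*}
so $\Phi(u)^{-}\equiv 0$. For the upper bound set $w=\Phi(u)-1/q$; then $\Delta_gw+(m_1^2+q^2u^2)w=-m_1^2/q\le 0$, and testing against $w^{+}$ forces $w^{+}\equiv 0$. Hence $0\le\Phi(u)\le 1/q$.

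For the local Lipschitz property and differentiability, write the difference of the equations for $\Phi(u_1),\Phi(u_2)$ as
\begin{equation*}
\Delta_g\bigl(\Phi(u_1)-\Phi(u_2)\bigr)+(m_1^2+q^2u_1^2)\bigl(\Phi(u_1)-\Phi(u_2)\bigr)=q(u_1-u_2)(u_1+u_2)\bigl(1-q\Phi(u_2)\bigr),
\end{equation*}
test against $\Phi(u_1)-\Phi(u_2)$, and use $|1-q\Phi(u_2)|\le 1$ together with the H\"older chain $L^4\times L^4\times L^2$ to obtain $\Vert\Phi(u_1)-\Phi(u_2)\Vert_{H^1}\le C\Vert u_1-u_2\Vert_{H^1}$ on bounded subsets of $H^1(M)$. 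To get Gateaux differentiability at $u$, define $V_u(\varphi)\in H^1(M)$ as the unique Lax--Milgram solution of \eqref{EqtDiffPhiLem1}, form the remainder $R_t=\Phi(u+t\varphi)-\Phi(u)-tV_u(\varphi)$, derive an equation of the same type as above for $R_t$ with right-hand side $o(t)$ in a dual Sobolev sense, and conclude $\Vert R_t\Vert_{H^1}=o(t)$. Fr\'echet differentiability and continuity of $DR\Phi$ then follow from the same estimate, upgrading $\Phi$ to $C^1$.

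For the $C^1$-smoothness of $\Psi$, formal differentiation gives
\begin{equation*}
D\Psi(u).\varphi=-\frac{q}{2}\int_Mu^2V_u(\varphi)dv_g+\int_M\bigl(1-q\Phi(u)\bigr)u\varphi\, dv_g.
\end{equation*}
The key simplification uses \eqref{DefPhi} and \eqref{EqtDiffPhiLem1} as mutually dual test functions: pairing the equation for $V_u(\varphi)$ with $\Phi(u)$ and integrating by parts, the symmetric terms cancel and one is left with
\begin{equation*}
q\int_Mu^2V_u(\varphi)dv_g=2\int_Mqu\bigl(1-q\Phi(u)\bigr)\Phi(u)\varphi\, dv_g.
\end{equation*}
Substituting yields $D\Psi(u).\varphi=\int_M(1-q\Phi(u))^2u\varphi\, dv_g$, which is \eqref{DiffPsi}, and continuity of $u\mapsto D\Psi(u)$ in the operator topology on $H^1(M)^{\ast}$ follows from the local Lipschitz property of $\Phi$ combined with the bound $0\le\Phi\le 1/q$.

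The main obstacle I anticipate is \emph{not} the construction of $\Phi$ itself but controlling the critical nonlinearity $u^2\Phi$ along the differentiability argument: because all three factors $u,u,\Phi$ sit at the same Sobolev endpoint $L^4$ in dimension $4$, every H\"older splitting must be carried out at the endpoint and any loss of a derivative is fatal. The positivity of $m_1^2$ and the uniform $L^{\infty}$ bound $0\le\Phi\le 1/q$, together with the duality trick identified above, are what save the day.
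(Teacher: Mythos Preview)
Your proposal is correct and follows the same overall route as the paper: existence of $\Phi(u)$ from coercivity of the quadratic form, the bounds $0\le\Phi\le 1/q$ by maximum principle, local Lipschitz and differentiability from the difference equation, and then the formula for $D\Psi$. Two minor points of execution differ. First, the paper builds $\Phi(u)$ by constrained minimization of $H_u(\varphi)=\int_M|\nabla\varphi|^2+\int_M(m_1^2+q^2u^2)\varphi^2$ over $\{\int_Mu^2\varphi=1\}$ rather than by Lax--Milgram; both are standard and equivalent here. Second, for \eqref{DiffPsi} the paper rewrites $\Psi(u)=H\bigl(u,\Phi(u)\bigr)+\tfrac{1}{2}\int_Mu^2$ with $H(u,\Phi)=\tfrac{1}{2}H_u(\Phi)-q\int_Mu^2\Phi$ and uses the stationarity $\partial_\Phi H\bigl(u,\Phi(u)\bigr)=0$ to kill the $V_u$-contribution when differentiating. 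Your duality pairing (testing the $V_u$-equation against $\Phi(u)$ and the $\Phi(u)$-equation against $V_u$) is exactly the same cancellation written out by hand, so the two computations are the same in content.
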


\begin{proof}[Proof of Lemma \ref{Affine4Dim}]  We briefly sketch the proof. Let $u \in H^1$ and $H_u: H^1 \to \mathbb{R}$ be defined by
$$H_u(\varphi) = \int_M\vert\nabla\varphi\vert^2dv_g + \int_M(m_1^2+q^2u^2)\varphi^2dv_g
\hskip.1cm .$$
The functional is well defined since $H^1 \subset L^4$. Letting $\Phi(0) = 0$ we can assume that $u \not\equiv 0$. Let
$$\mu = \inf_{u \in H^1, \int u^2\varphi = 1}H_u(\varphi)
\hskip.1cm .$$
By standard minimization arguments there exists $\varphi \in H^1(M)$ such that $\int_Mu^2\varphi dv_g = 1$ and $H_u(\varphi) = \mu$. In particular, 
$\mu > 0$. Letting $\Phi(u) = \frac{q}{\mu}\varphi$ we get that $\Phi(u)$ solves \eqref{DefPhi} in $H^1$. 
It is easily seen that $\Phi(u)$ is 
unique. By the maximum principle, 
$\Phi(u) \ge 0$. Noting that
$$\Delta_g\left(\frac{1}{q}-\Phi(u)\right) + (m_1^2+q^2)u^2\left(\frac{1}{q}-\Phi(u)\right) \ge 0$$
it also follows from the maximum principle that $\Phi(u) \le \frac{1}{q}$. Now we let 
$u, v \in  H^1(M)$. We have that
$$\Delta_g\left(\Phi(v)-\Phi(u)\right) + (m_1^2+q^2)u^2\left(\Phi(v)-\Phi(u)\right) = q(v^2-u^2)\left(1-q\Phi(v)\right)
\hskip.1cm .$$
Multiplying the equation by $\Phi(v)-\Phi(u)$, integrating over $M$, and by the Sobolev emedding theorem, 
we get that
\begin{equation}\label{ContEqt1}
\Vert\Phi(v)-\Phi(u)\Vert_{H^1} \le C\left(\Vert u\Vert_{H^1} + \Vert v\Vert_{H^1}\right)\Vert v-u\Vert_{H^1}
\hskip.1cm .
\end{equation}
In particular, $\Phi$ is locally Lipschitz continuous. We can prove the existence of $V_u(\varphi)$ 
in \eqref{EqtDiffPhiLem1} as when proving the existence of $\Phi(u)$. 
Writing the equation satisfied by $\Phi(u+\varphi)-\Phi(u)-V_u(\varphi)$, multiplying the equation by $\Phi(u+\varphi)-\Phi(u)-V_u(\varphi)$ and integrating 
over $M$, we get that
$$\Vert\Phi(u+\varphi)-\Phi(u)-V_u(\varphi)\Vert_{H^1}
\le C \Vert\varphi\Vert_{H^1}
\left(\Vert\varphi\Vert_{H^1} + \Vert u\Vert_{H^1}\Vert\Phi(u+\varphi)-\Phi(u)\Vert_{H^1}\right)
$$
Then the differentiability of $\Phi$ follows from the continuity of $\Phi$. In particular, 
$\Psi$ is differentiable. 
By \eqref{DefPhi},
$$\Psi(u) = \frac{1}{2}\int_M\left(\vert\nabla\Phi(u)\vert^2+m_1^2\Phi(u)^2\right)dv_g 
+ \frac{1}{2}\int_M\left(1-q\Phi(u)\right)^2u^2dv_g\hskip.1cm ,$$
and we also have that $\frac{\partial H}{\partial\Phi}\left(u,\Phi(u)\right) = 0$, where 
$H(u,\Phi) = \frac{1}{2}H_u(\Phi)
- q\int_Mu^2\Phi dv_g$.
Noting that
$$\Psi(u) = H\left(u,\Phi(u)\right) + \frac{1}{2}\int_Mu^2dv_g\hskip.1cm ,$$
we get that \eqref{DiffPsi} holds true. The continuity of $D\Psi$ can be proved directly from \eqref{DiffPsi} and the continuity of $\Phi$. 
This ends the proof of the lemma.
\end{proof}

Now we prove the subcritical existence of Theorem \ref{Thm1}. We proceed by applying the mountain pass lemma to the functional 
$I_p$ in \eqref{DefFct}.

\begin{proof}[Proof of existence in Theorem \ref{Thm1}] By Lemma \ref{Affine4Dim}, $I_p$ is $C^1$ in $H^1$. Let $u_0 \in H^1$ such that 
$u_0^+ \not\equiv 0$. There holds $I_p(0) = 0$ and $I_p(tu_0) \to -\infty$ as $t \to +\infty$ since $p > 2$. 
Since $0 \le \Phi(u) \le \frac{1}{q}$ for all $u$, we also have that 
\begin{eqnarray*} I_p(u) 
&\ge& \frac{1}{2}\left(\int_M\vert\nabla u\vert^2dv_g + (m_0^2-\omega^2)\int_Mu^2dv_g\right) - \frac{1}{p}\int_M\vert u\vert^pdv_g\\
&\ge& C_1 \Vert u\Vert_{H^1}^2 - C_2\Vert u\Vert_{H^1}^p
\end{eqnarray*}
for all $u \in H^1$, where $C_1, C_2 > 0$ do not depend on $u$. In particular, there exist $\delta, C > 0$ such that $I_p(u) \ge C$ for all $u \in H^1$ such that 
$\Vert u\Vert_{H^1} = \delta$. Let $T_0 = T_0(u_0)$, $T_0 \gg 1$, be such that $I_p(T_0u_0) < 0$, and $c_p = c_p(u_0)$ be given by 
\begin{equation}\label{Defcp}
c_p = \inf_{P\in\mathcal{P}}\max_{u \in P}I_p(u)
\hskip.1cm ,
\end{equation}
where $\mathcal{P}$ is the class of continuous paths joining $0$ to $T_0u_0$. According to the above we can apply the mountain pass lemma and we get the 
existence of a sequence $(u_\alpha)_\alpha$ in $H^1$ such that $I_p(u_\alpha) \to c_p$ and $DI_p(u_\alpha) \to 0$ as $\alpha \to +\infty$. Writing that 
$I_p(u_\alpha) = c_p + o(1)$ and that $DI_p(u_\alpha).(u_\alpha) = o(\Vert u_\alpha\Vert_{H^1})$, we get by Lemma \ref{Affine4Dim} that
\begin{equation}\label{SubCritEqt1}
\begin{split}
&\frac{1}{2}\int_M\left(\vert\nabla u_\alpha\vert^2+m_0^2u_\alpha^2\right)dv_g\\
&\hskip.4cm = \frac{1}{p}\int_M(u_\alpha^+)^pdv_g + c_p +  
\frac{\omega^2}{2}\int_M\left(1-q\Phi(u_\alpha)\right)u_\alpha^2dv_g + o(1)\\
&\frac{1}{2}\int_M\left(\vert\nabla u_\alpha\vert^2+m_0^2u_\alpha^2\right)dv_g\\
&\hskip.4cm = \frac{1}{2}\int_M(u_\alpha^+)^pdv_g 
+ \frac{\omega^2}{2}\int_M\left(1-q\Phi(u_\alpha)\right)^2u_\alpha^2dv_g + o\left(\Vert u_\alpha\Vert_{H^1}\right)
\end{split}
\end{equation}
for all $\alpha$. 
Writing that $DI_p(u_\alpha).(u_\alpha^-) = o(\Vert u_\alpha^-\Vert_{H^1})$ we get that $u_\alpha^- \to 0$ 
in $H^1$ as $\alpha \to +\infty$. By \eqref{SubCritEqt1} we then get that 
$(u_\alpha)_\alpha$ is bounded in $H^1$. In particular, there exists $u_p \in H^1(M)$ such that, up to passing to a subsequence, 

\medskip (i) $u_\alpha \rightharpoonup u_p$ weakly in $H^1$, 

\medskip (ii) $u_\alpha \to u_p$ in $L^p$,

\medskip\noindent and $u_\alpha \to u_p$ a.e. as $\alpha \to +\infty$. Substracting one equation to another in \eqref{SubCritEqt1}, 
letting $\alpha \to +\infty$, and since $c_p \not= 0$, we get that $u_p \not\equiv 0$. Writing the equation satisfied by 
$\Phi(u_\alpha)-\Phi(u_p)$, multiplying the equation by $\Phi(u_\alpha)-\Phi(u_p)$ and integrating over $M$, we get that 
\begin{equation}\label{ConvStepSubCase}
\Phi(u_\alpha) \to \Phi(u_p)\hskip.2cm\hbox{in}\hskip.1cm H^1
\end{equation}
as $\alpha \to +\infty$. 
Now we let $\varphi \in H^1$. There holds $DI_p(u_\alpha).(\varphi) = o(1)$. Hence, 
by Lemma \ref{Affine4Dim},
\begin{equation}\label{SubCritEqt5}
\begin{split}
&\int_M\nabla u_\alpha\nabla\varphi dv_g + m_0^2\int_Mu_\alpha\varphi dv_g\\
&= \int_M(u_\alpha^+)^{p-1}\varphi dv_g + \omega^2\int_M\left(1-q\Phi(u_\alpha)\right)^2u_\alpha\varphi dv_g + o(1)
\hskip.1cm .
\end{split}
\end{equation}
Letting $\alpha \to +\infty$ in \eqref{SubCritEqt5} we then get by \eqref{ConvStepSubCase} that 
$$\Delta_gu_p + m_0^2u_p = (u_p^+)^{p-1} + \omega^2\left(1-q\Phi(u_p)\right)^2u_p$$
in $H^1$. Multiplying the equation by $u_p^-$ and integrating over $M$, it follows 
that $u_p^- \equiv 0$. In particular, $u_p \ge 0$, $u_p \not\equiv 0$, and 
\begin{equation}\label{SubCritEqt7}
\Delta_gu_p + m_0^2u_p = u_p^{p-1} + \omega^2\left(1-q\Phi(u_p)\right)^2u_p
\end{equation}
in $H^1$. By regularity results we get from \eqref{SubCritEqt7} that $u_p \in H^{2,s}$ for all $s$. Then, 
by regularity results, $\Phi(u_p) \in H^{2,s}$ 
for all $s$. 
By the Sobolev embedding theorem, regularity theory, and the maximum principle, it follows that $u_p, \Phi(u_p) 
\in C^2(M)$ and that $u_p, \Phi(u_p) > 0$ in $M$. Letting $u = u_p$ and 
$v = \Phi(u_p)$, this proves the existence part in Theorem \ref{Thm1}. 
\end{proof}

An additional information we obtain is that $u_p$ realizes $c_p$. Indeed, since $u_p \ge 0$, 
$u_\alpha^- \to 0$ in $H^1$, and $\Phi(u_\alpha)\to\Phi(u_p)$ 
in $H^1$, there holds that 
\begin{eqnarray*}
&&\int_M(u_\alpha^+)^pdv_g \to \int_Mu_p^pdv_g
\hskip.1cm ,\hskip.1cm\hbox{and}\\
&&\int_M\left(1-q\Phi(u_\alpha)\right)^2u_\alpha^2dv_g \to \int_M\left(1-q\Phi(u_p)\right)^2u_p^2dv_g
\end{eqnarray*}
as $\alpha \to +\infty$. The second equation in \eqref{SubCritEqt1} together with \eqref{SubCritEqt7} then give that 
$$\int_M\vert\nabla u_\alpha\vert^2dv_g \to \int_M\vert\nabla u_p\vert^2dv_g
\hskip.1cm .$$
It follows that $u_\alpha \to u_p$ in $H^1$ as $\alpha \to +\infty$. By the first equation in \eqref{SubCritEqt1} we then get 
that $I_p(u_p) = c_p$. In other words, $c_p$ is realized by $u_p$.
Now, given $x_0 \in M$ and $\rho_0 > 0$ small, sufficiently small, 
we define $u_\varepsilon$ by
\begin{equation}\label{DefuEpsHighDim}
\begin{cases}
u_\varepsilon(x) = \frac{\varepsilon}{\varepsilon^2+r^2}-\frac{\varepsilon}{\varepsilon^2+\rho_0^2}
\hskip.2cm\hbox{if}~r \le \rho_0\hskip.1cm ,\\
u_\varepsilon(x) = 0\hskip.2cm\hbox{if}\hskip.1cm r \ge \rho_0\hskip.1cm ,
\end{cases}
\end{equation}
where $r = d_g(x_0,x)$. Then, see Aubin \cite{Aub}, for any $\lambda \in \mathbb{R}$,
\begin{equation}\label{TestComputaHighDim}
J_\lambda(u_\varepsilon)= \frac{1}{K_4^2}
\left(1 + C\left(\frac{1}{6}S_g(x_0)-\lambda\right)\varepsilon^2\ln\varepsilon + o(\varepsilon^2\ln\varepsilon)\right)\hskip.1cm ,
\end{equation}
where
$$J_\lambda(u_\varepsilon) = \frac{\int_M\left(\vert\nabla u_\varepsilon\vert^2+\lambda u_\varepsilon^2\right)dv_g}{\left(\int_Mu_\varepsilon^4dv_g\right)^{1/2}}
\hskip.1cm ,$$
and $C > 0$ is independent of $\alpha$. Also there holds
\begin{equation}\label{QuotSch2bis}
\begin{split}
&\int_Mu_\varepsilon^4dv_g = \int_{\mathbb{R}^3}\left(\frac{1}{1+\vert x\vert^2}\right)^4dx + o(1)\hskip.1cm ,\\
&\int_M\vert\nabla u_\varepsilon\vert^2dv_g = 8\int_Mu_\varepsilon^4dv_g + o(1)\hskip.1cm .
\end{split}
\end{equation}
In what follows we prove the existence part of Theorem \ref{Thm2}.

\begin{proof}[Proof of existence in Theorem \ref{Thm2}] As a preliminary remark, 
by standard arguments such as developed in Aubin \cite{Aub} and Br\'ezis and Nirenberg \cite{BreNir}, we just need to prove that we can chose 
$u_0 \in H^1$, $u_0^+ \not\equiv 0$, such that
\begin{equation}\label{AubinTypeIneqt}
\delta_0 \le c_p \le \frac{1}{4K_4^4}-\delta_0
\end{equation}
for all $p \in (4-\varepsilon,4)$ and some $\varepsilon, \delta_0 > 0$, where $c_p = c_p(u_0)$ is as in \eqref{Defcp}. Now 
we assume that \eqref{MainAssumpt} holds true for some $x \in M$, in particular for $x \in M$ where $S_g$ is maximum. 
We let $x_0 \in M$ be such that $S_g$ is maximum at $x_0$, and 
$(t_\varepsilon)_\varepsilon$ be any family 
of positive real numbers such that the $t_\varepsilon$'s are 
bounded. The first estimate we prove is that
\begin{equation}\label{Dim4KeyEst}
\int_M\Phi(t_\varepsilon u_ \varepsilon)u_ \varepsilon ^2dv_g = O\left(\varepsilon^2\right)
\hskip.1cm ,
\end{equation}
where the $u_ \varepsilon $'s are as in \eqref{DefuEpsHighDim}. By definition,
\begin{equation}\label{EstDim4Eqt1}
\Delta_g\Phi(t_\varepsilon u_\varepsilon) + (m_1^2+q^2)t_\varepsilon^2u_\varepsilon^2\Phi(t_\varepsilon u_\varepsilon) = qt_\varepsilon^2u_\varepsilon^2
\hskip.1cm .
\end{equation}
Multiplying \eqref{EstDim4Eqt1} by $\Phi(t_\varepsilon u_\varepsilon)$ and integrating over $M$ we get by H\"older's inequalities that
\begin{eqnarray*} \Vert\Phi(t_\varepsilon u_\varepsilon)\Vert^2_{H^1}
&=& qt_\varepsilon^2\int_Mu_\varepsilon^2\Phi(t_\varepsilon u_\varepsilon)dv_g\\
&\le& C \left(\int_Mu_\varepsilon^{8/3}dv_g\right)^{3/4}\Vert\Phi(t_\varepsilon u_\varepsilon)\Vert_{L^4}
\end{eqnarray*}
and it follows from the Sobolev inequality that
\begin{equation}\label{EstDim4Eqt2}
\Vert\Phi(t_\varepsilon u_\varepsilon)\Vert_{H^1} \le C \left(\int_Mu_\varepsilon^{8/3}dv_g\right)^{3/4}
\hskip.1cm .
\end{equation}
Then, by \eqref{EstDim4Eqt2},
\begin{equation}\label{EstDim4Eqt3}
\begin{split}
\int_M\Phi(t_\varepsilon u_ \varepsilon)u_ \varepsilon ^2dv_g 
&\le  C \left(\int_Mu_\varepsilon^{8/3}dv_g\right)^{3/4}\Vert\Phi(t_\varepsilon u_\varepsilon)\Vert_{L^4}\\
&\le C \left(\int_Mu_\varepsilon^{8/3}dv_g\right)^{3/2}
\hskip.1cm .
\end{split}
\end{equation}
There holds,
\begin{equation}\label{EstDim4Eqt4}
\begin{split}
\int_Mu_\varepsilon^{8/3}dv_g 
&\le \omega_3\int_0^{\rho_0}\left(\frac{\varepsilon}{\varepsilon^2 + r^2}\right)^{8/3}r^3dr\\
&= \omega_3\varepsilon^{4/3}\int_0^{\rho_0/\varepsilon}\left(\frac{1}{1 + r^2}\right)^{8/3}r^3dr\\
&= O(\varepsilon^{4/3})\hskip.1cm .
\end{split}
\end{equation}
By \eqref{EstDim4Eqt3} and \eqref{EstDim4Eqt4}, this proves \eqref{Dim4KeyEst}. 
Let $(\varepsilon_\alpha)_\alpha$ be a sequence of positive real 
numbers such that $\varepsilon_\alpha \to 0$ as $\alpha \to +\infty$, $u_\alpha = u_{\varepsilon_\alpha}$, and $\mathcal{F}_4$ be the functional defined in $H^1$ 
by 
\begin{equation}\label{DefF6Proof1bis}
\mathcal{F}_4(u) = \frac{1}{2}\int_M\vert\nabla u\vert^2dv_g + \frac{1}{2}(m_0^2-\omega^2)\int_Mu^2dv_g - 
\frac{1}{4}\int_M\vert u\vert^4dv_g\hskip.1cm .
\end{equation}
By \eqref{QuotSch2bis}, there exists $T_0 \gg 1$ such that $I_4(T_0u_\alpha) < 0$ for all $\alpha \gg 1$. There also holds that
\begin{eqnarray*} \max_{0\le t\le T_0}I_4(tu_\alpha) 
&\le& \max_{0\le t\le T_0}\mathcal{F}_4(tu_\alpha) + CT_0^2\max_{0\le t\le T_0}\int_M\Phi(tu_\alpha)u_\alpha^2dv_g\\
&\le& \frac{1}{4}J_\lambda(u_\alpha)^2 + CT_0^2\max_{0\le t\le T_0}\int_M\Phi(tu_\alpha)u_\alpha^2dv_g
\end{eqnarray*}
for all $\alpha$, where $\lambda = m_0^2-\omega^2$. By \eqref{TestComputaHighDim} and \eqref{Dim4KeyEst} we thus get that
$$\max_{0\le t\le T_0}I_4(tu_\alpha) \le \frac{1}{K_4^4}
\left(1 + C\left(\frac{1}{6}S_g(x_0)-m_0^2+\omega^2\right)\varepsilon_\alpha^2\ln\varepsilon_\alpha + o(\varepsilon_\alpha^2\ln\varepsilon_\alpha)\right)\hskip.1cm ,$$
where $C > 0$ is independent of $\alpha$. By assumption the $\varepsilon_\alpha^2\ln\varepsilon_\alpha$ coefficient is positive. 
Let $u_0 = u_\alpha$, where $\alpha \gg 1$ is sufficiently large such that
$$\max_{0\le t\le T_0}I_4(tu_\alpha) \le \frac{1}{4K_4^4} - \delta_0$$
for some $\delta_0 > 0$. Since $u_0$ is now fixed, we can write that
\begin{equation}\label{CompFctsProof1Eqt2bis}
\max_{0\le t\le T_0}I_p(tu_0) \le (1+\delta_\varepsilon)\max_{0\le t\le T_0}I_4(tu_0)
\end{equation}
for all $p \in (4-\varepsilon,4)$, where $\delta_\varepsilon >0$ is such that $\delta_\varepsilon \to 0$ 
as $\varepsilon \to 0$. Noting that 
\begin{eqnarray*}
I_p(u) 
&\ge& \frac{1}{2}\int_M\left(\vert\nabla u\vert^2+(m_0^2-\omega^2)u^2\right)dv_g - \frac{1}{p}\int_M\vert u\vert^pdv_g\hskip.1cm ,\\
&\ge& C_1\Vert u\Vert_{H^1}^2 - C_2\Vert u\Vert_{H^1}^p
\end{eqnarray*}
where $C_1, C_2 > 0$ are independent of $u$, there holds that there exist $\delta_1, \delta_2 > 0$ such that $\delta_1, \delta_2$ are as small as we want, and 
$I_p(u) \ge \delta_2$ for all $u$ such that $\Vert u\Vert_{H^1} = \delta_1$. As a 
conclusion, there exist $\delta_0 > 0$ and $\varepsilon > 0$ such that \eqref{AubinTypeIneqt} 
holds true 
for all $p \in (4-\varepsilon,4)$. This ends the proof of the existence part in Theorem \ref{Thm2}.
\end{proof}

There are always constant solutions to \eqref{SWSyst}. By \eqref{AubinTypeIneqt} the MPT solutions we obtain are distinct from these 
constant solutions in several situations, e.g. like on $S^1(T)\times S^3$ for $T\gg1$ when $m_1^2/q \ll 1$.

\section{A priori bounds in the subcritical case}\label{Apriori1}

We prove the uniform bounds in the subcritical case of Theorem \ref{Thm1}. In what follows $p \in (2,4)$.

\begin{proof}[Proof of the uniform bounds in Theorem \ref{Thm1}] Let $(\omega_\alpha)_\alpha$ be a sequence in $(-m_0,m_0)$ such that 
$\omega_\alpha \to \omega$ as $\alpha \to +\infty$ for some $\omega \in [-m_0,m_0]$. Also let 
$p \in (2,4)$ and $\bigl((u_\alpha,v_\alpha)\bigr)_\alpha$ be a sequence 
of smooth positive solutions of \eqref{SWSyst} with phases $\omega_\alpha$. Then,
\begin{equation}\label{SWSystAlpha}
\begin{cases}
\Delta_gu_\alpha + m_0^2u_\alpha = u_\alpha^{p-1} + \omega_\alpha^2\left(qv_\alpha-1\right)^2u_\alpha\\
\Delta_gv_\alpha + \left(m_1^2+q^2u_\alpha^2\right)v_\alpha = qu_\alpha^2
\end{cases}
\end{equation}
for all $\alpha$. By the second equation in \eqref{SWSystAlpha}, 
$0 \le v_\alpha \le \frac{1}{q}$ for all $\alpha$. Assume by contradiction that
\begin{equation}\label{ContrAssumptSubCptness1}
\max_Mu_\alpha \to +\infty
\end{equation}
as $\alpha \to +\infty$. Let $x_\alpha \in M$ and $\mu_\alpha > 0$ be given by
$$u_\alpha(x_\alpha) = \max_Mu_\alpha = \mu_\alpha^{-2/(p-2)}\hskip.1cm .$$
By \eqref{ContrAssumptSubCptness1}, $\mu_\alpha \to 0$ as $\alpha \to +\infty$. Define $\tilde u_\alpha$ by
$$\tilde u_\alpha(x) = \mu_\alpha^{\frac{2}{p-2}}u_\alpha\left(\exp_{x_\alpha}(\mu_\alpha x)\right)$$
and $g_\alpha$ by $g_\alpha(x) = \left(\exp_{x_\alpha}^\star g\right)(\mu_\alpha x)$ for $x \in B_0(\delta\mu_\alpha^{-1})$, 
where $\delta > 0$ is small. Since $\mu_\alpha \to 0$, we get that $g_\alpha \to \xi$ in $C^2_{loc}(\mathbb{R}^3)$ as $\alpha \to +\infty$. 
Moreover, by \eqref{SWSystAlpha},
\begin{equation}\label{EqtTildeUAlpha}
\Delta_{g_\alpha}\tilde u_\alpha + \mu_\alpha^2m_0^2\tilde u_\alpha = \tilde u_\alpha^{p-1} 
+ \omega_\alpha^2\mu_\alpha^2\left(q\hat v_\alpha - 1\right)^2\tilde u_\alpha\hskip.1cm ,
\end{equation}
where $\hat v_\alpha$ is given by $\hat v_\alpha(x) = v_\alpha\left(\exp_{x_\alpha}(\mu_\alpha x)\right)$. 
We have $\tilde u_\alpha(0) = 1$ and $0 \le \tilde u_\alpha \le 1$. By \eqref{EqtTildeUAlpha} and standard elliptic theory 
arguments, we can write that, after passing to a subsequence, $\tilde u_\alpha \to u$ in $C^{1,\theta}_{loc}(\mathbb{R}^4)$ 
as $\alpha \to +\infty$, where $u$ is such that $u(0) = 1$ and $0 \le u \le 1$. Then
$$\Delta u = u^{p-1}$$
in $\mathbb{R}^4$, where $\Delta$ is the Euclidean Laplacian. It follows that $u$ is actually smooth and positive, and, 
since $2 < p < 4$, we get 
a contradiction with the Liouville result of Gidas and Spruck \cite{GidSpr}. As a conclusion, \eqref{ContrAssumptSubCptness1} is 
not possible and there exists $C > 0$ such that
\begin{equation}\label{ConclContr1}
u_\alpha + v_\alpha \le C
\end{equation}
in $M$ for all $\alpha$. Coming back to \eqref{SWSystAlpha} it follows that the sequences $(u_\alpha)_\alpha$ and $(v_\alpha)_\alpha$ 
are actually bounded in $H^{2,s}$ for all $s$. Pushing one step further the regularity argument they turn out to be bounded in $H^{3,s}$ for all $s$, 
and by the Sobolev embedding theorem we get that they are also 
bounded in $C^{2,\theta}$, $0 < \theta < 1$. This ends the proof of the uniform bounds in Theorem \ref{Thm1} when $p \in (2,4)$. 
\end{proof}

If we assume that 
$\omega_\alpha \to \omega$ as $\alpha \to +\infty$ for some $\omega \in (-m_0,m_0)$, 
$p \in (2,4]$, and $u_\alpha \to u$ and $v_\alpha \to v$ 
in $C^2$ as $\alpha \to +\infty$, then $u > 0$, $v > 0$, and $u, v$ are smooth solutions of \eqref{SWSyst}.
Indeed, given $\varepsilon > 0$ sufficiently small, since $m_0^2-\omega^2 > 0$, 
$\Delta_g + (m_0^2-\omega^2-\varepsilon)$ is coercive. There holds that $0 \le v_\alpha \le \frac{1}{q}$ for all $\alpha$. 
In particular, by \eqref{SWSystAlpha} and the Sobolev inequality, for any $\alpha \gg 1$ sufficiently large,
\begin{eqnarray*} 
&&\int_M\left(\vert\nabla u_\alpha\vert^2+ \left(m_0^2-\omega^2-\varepsilon\right)u_\alpha^2\right)dv_g\\
&&\le \int_M\vert\nabla u_\alpha\vert^2dv_g+ m_0^2\int_Mu_\alpha^2dv_g - \omega_\alpha^2\int_M(qv_\alpha^2-1)^2u_\alpha^2dv_g\\
&&= \int_Mu_\alpha^pdv_g
\le C\left(\int_M\left(\vert\nabla u_\alpha\vert^2+ \left(m_0^2-\omega^2-\varepsilon\right)u_\alpha^2\right)dv_g\right)^{p/2}
\end{eqnarray*}
for some $C > 0$ independent of $\alpha$. This implies $u > 0$ and then $v > 0$. 
Obviously the positivity of $u$ and $v$ does not hold anymore if we allow 
$\omega^2 = m_0^2$. Let $(\varepsilon_\alpha)_\alpha$ be a sequence of positive real numbers such that $\varepsilon_\alpha \to 0$ as 
$\alpha \to +\infty$. Let $u_\alpha = \varepsilon_\alpha$ and
$$v_\alpha = \frac{q\varepsilon_\alpha^2}{m_1^2 + q^2\varepsilon_\alpha^2}\hskip.1cm .$$
Then $u_\alpha \to 0$ and 
$v_\alpha \to 0$ in $C^2$ as $\alpha \to +\infty$, and we do have that $(u_\alpha,v_\alpha)$ solves \eqref{SWSystAlpha} , where 
$$\omega_\alpha^2 = \frac{1}{(qv_\alpha-1)^2}\left(m_0^2-\varepsilon_\alpha^{p-2}\right)\hskip.1cm .$$
In this case $\omega_\alpha^2 \to m_0^2$ as $\alpha \to +\infty$ and 
the construction provides a counter example to the above statement about the positivity of $u$ and $v$.

\section{A priori bounds in the critical case}\label{Apriori2}

In what follows we let $(M,g)$ be a smooth compact $4$-dimensional Riemannian manifold, 
$m_0,m_1 > 0$, and $(\omega_\alpha)_\alpha$ be a sequence in $(-m_0,m_0)$ such that 
$\omega_\alpha \to \omega$ as $\alpha \to +\infty$ for some $\omega \in [-m_0,m_0]$. Also we let 
$\bigl((u_\alpha,v_\alpha)\bigr)_\alpha$ be a sequence 
of smooth positive solutions of \eqref{SWSyst} with phases $\omega_\alpha$ and $p = 4$. Namely,
\begin{equation}\label{SWSystAlphaCrit}
\begin{cases}
\Delta_gu_\alpha + m_0^2u_\alpha = u_\alpha^3 + \omega_\alpha^2\left(qv_\alpha-1\right)^2u_\alpha\\
\Delta_gv_\alpha + \left(m_1^2+q^2u_\alpha^2\right)v_\alpha = qu_\alpha^2
\end{cases}
\end{equation}
for all $\alpha$. By the second equation in \eqref{SWSystAlphaCrit}, 
$0 \le v_\alpha \le \frac{1}{q}$ for all $\alpha$. In particular, if we let
\begin{equation}\label{Defhalpha}
h_\alpha = m_0^2 - \omega_\alpha^2\left(qv_\alpha-1\right)^2\hskip.1cm ,
\end{equation}
then $\Vert h_\alpha\Vert_{L^\infty} \le C$ for all $\alpha$, where $C > 0$ is independent of $\alpha$. 
Assume by contradiction that
\begin{equation}\label{ContrAssumptSubCptness}
\max_Mu_\alpha \to +\infty
\end{equation}
as $\alpha \to +\infty$. In what follows we let $(x_\alpha)_\alpha$ be a sequence of points in $M$, and $(\rho_\alpha)_\alpha$ be a 
sequence of positive real numbers, $0 < \rho_\alpha < i_g/7$ for all $\alpha$, where $i_g$ is the injectivity radius 
of $(M,g)$. We assume that the $x_\alpha$'s and $\rho_\alpha$'s satisfy
\begin{equation}\label{hypCdts}
\begin{cases}
\nabla u_\alpha(x_\alpha) = 0\hskip.1cm\hbox{for all}\hskip.1cm \alpha ,\\
d_g(x_\alpha,x)u_\alpha(x) \le C\hskip.1cm\hbox{for all}\hskip.1cm x\in B_{x_\alpha}(7\rho_\alpha)\hskip.1cm\hbox{and all}\hskip.1cm \alpha\hskip.1cm ,\\
\lim_{\alpha \to +\infty}\rho_\alpha\sup_{B_{x_\alpha}(6\rho_\alpha)}u_\alpha(x) = +\infty\hskip.1cm .
\end{cases}
\end{equation}
We let $\mu_\alpha$ be given by
\begin{equation}\label{DefMualpha}
\mu_\alpha = u_\alpha(x_\alpha)^{-1}\hskip.1cm .
\end{equation}
Since the $h_\alpha$'s in \eqref{Defhalpha} are $L^\infty$-bounded we can apply the asymptotic analysis in Druet and Hebey \cite{DruHeb0} and 
Druet, Hebey and V\'etois \cite{DruHebVet}. In particular, we get that $\frac{\rho_\alpha}{\mu_\alpha} \to +\infty$ as 
$\alpha \to +\infty$ and that
\begin{equation}\label{Limualpha}
\mu_\alpha u_\alpha\left(\exp_{x_\alpha}(\mu_\alpha x)\right) \to \left(1 + \frac{\vert x\vert^2}{8}\right)^{-1}
\end{equation}
in $C^1_{loc}(\mathbb{R}^4)$ as $\alpha \to +\infty$, where $\mu_\alpha$ is as in \eqref{DefMualpha}. As a consequence, 
$\mu_\alpha \to 0$ 
as $\alpha \to +\infty$. 
Now we define $\varphi_\alpha : \left(0,\rho_\alpha\right)\mapsto {\mathbb R}^+$ by 
\begin{equation}\label{eq3.1}
\varphi_\alpha(r)= \frac{1}{\left\vert \partial B_{x_\alpha}\left(r\right)\right\vert_g} 
\int_{\partial B_{x_\alpha}\left(r\right)} u_\alpha d\sigma_g\hskip.1cm ,
\end{equation}
where $\left\vert \partial B_{x_\alpha}\left(r\right)\right\vert_g$ is the volume of the sphere of center $x_\alpha$ 
and radius $r$ for the induced metric. 
Let $\Lambda = 4\sqrt{2}$. 
We define $r_\alpha\in \left[\Lambda\mu_\alpha, \rho_\alpha\right]$ by 
\begin{equation}\label{eq3.3}
r_\alpha = \sup\left\{r\in \left[\Lambda\mu_\alpha, \rho_\alpha\right]\hbox{ s.t. } \left(s\varphi_\alpha(s)\right)' \le 0 
\hskip.1cm\hbox{in}\hskip.1cm\left[\Lambda\mu_\alpha,r\right]\right\}\hskip.1cm .
\end{equation} 
It follows from \eqref{Limualpha} that 
\begin{equation}\label{eq3.4}
\frac{r_\alpha}{\mu_\alpha}\to +\infty
\end{equation}
as $\alpha \to +\infty$, while the definition of $r_\alpha$ gives that 
\begin{equation}\label{eq3.5}
r\varphi_\alpha(r) \hbox{ is non-increasing in }\left[\Lambda\mu_\alpha,r_\alpha\right]
\end{equation}
and that 
\begin{equation}\label{eq3.6}
\left(r\varphi_\alpha(r)\right)'\left(r_\alpha\right)=0\hbox{ if }r_\alpha<\rho_\alpha\hskip.1cm.
\end{equation}
Let $B_\alpha$ be defined in $M$ by
\begin{equation}\label{DefBAlpha}
B_\alpha(x) = \frac{\mu_\alpha}{\mu_\alpha^2 + \frac{d_g(x_\alpha,x)^2}{8}}\hskip.1cm ,
\end{equation}
where $\mu_\alpha$ is as in \eqref{DefMualpha}. 
The following sharp estimates, see Druet, Hebey and Robert \cite{DruHebRob} and Druet, Hebey and V\'etois \cite{DruHebVet}, hold true.

\begin{lem}\label{SharpEst} Let $(M,g)$ be a smooth compact Riemannian $4$-dimensional manifold, 
and $\bigl((u_\alpha,v_\alpha)\bigr)_\alpha$ be a sequence 
of smooth positive solutions of \eqref{SWSystAlphaCrit} such that \eqref{ContrAssumptSubCptness} holds true.
Let $(x_\alpha)_\alpha$ and $(\rho_\alpha)_\alpha$ be such that \eqref{hypCdts} hold true, and let $R \ge 6$ be such that 
$Rr_\alpha \le 6\rho_\alpha$ for all $\alpha \gg 1$. There exists $C > 0$ such that, after passing to a subsequence,
\begin{equation}\label{SharpEstCtrl}
u_\alpha(x) + d_g(x_\alpha,x)\left\vert\nabla u_\alpha(x)\right\vert \le 
C\mu_\alpha d_g(x_\alpha,x)^{-2}
\end{equation}
for all $x \in B_{x_\alpha}(\frac{R}{2}r_\alpha)\backslash\left\{x_\alpha\right\}$ and all $\alpha$, where
$\mu_\alpha$ is as in \eqref{DefMualpha}, and where $r_\alpha$ 
is as in \eqref{eq3.3}. In addition, there also exist $C > 0$ and $(\varepsilon_\alpha)_\alpha$ such that
\begin{equation}\label{SharpEstCtrlBis}
\left\vert u_\alpha - B_\alpha\right\vert \le C\mu_\alpha\left(r_\alpha^{-2}+S_\alpha\right) + \varepsilon_\alpha B_\alpha
\end{equation}
in $B_{x_\alpha}(2r_\alpha)\backslash\{x_\alpha\}$ for all $\alpha$, where $\varepsilon_\alpha \to 0$ as $\alpha \to +\infty$ 
and $S_\alpha(x) = d_g(x_\alpha,x)^{-1}$ for $x \in M\backslash\{x_\alpha\}$.
\end{lem}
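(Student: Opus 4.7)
The plan is to reduce the first equation of \eqref{SWSystAlphaCrit} to a perturbed scalar critical equation and then appeal to the asymptotic analysis of Druet-Hebey-Robert and Druet-Hebey-V\'etois. With $h_\alpha$ as in \eqref{Defhalpha}, the bound $0 \le v_\alpha \le 1/q$ gives $\Vert h_\alpha\Vert_{L^\infty} \le C$, and the first equation of \eqref{SWSystAlphaCrit} becomes $\Delta_g u_\alpha + h_\alpha u_\alpha = u_\alpha^3$ with $h_\alpha$ uniformly bounded. Thus $v_\alpha$ enters only through the harmless lower-order coefficient $h_\alpha$, and we are placed exactly in the framework of the cited references for isolated blow-up of the critical equation in dimension $4$.

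For the pointwise bound \eqref{SharpEstCtrl}, I would argue by contradiction that $\sup_{B_{x_\alpha}(Rr_\alpha/2)} d_g(x_\alpha,\cdot)^2 u_\alpha \le C$. If this failed, I would choose $y_\alpha$ realizing the supremum, rescale by $\tilde\mu_\alpha = u_\alpha(y_\alpha)^{-1}$ centered at $y_\alpha$, and extract a limit in the rescaled equation. The Caffarelli-Gidas-Spruck classification of positive solutions to $\Delta u = u^3$ on $\mathbb{R}^4$ would then produce a standard Aubin-Talenti bubble at $y_\alpha$, which is incompatible with the monotonicity \eqref{eq3.5}, \eqref{eq3.6} of $r\mapsto r\varphi_\alpha(r)$ since $y_\alpha$ necessarily sits in the monotone range up to a fixed multiplicative factor. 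The gradient control is then obtained by rescaled elliptic estimates on annuli centered at $x_\alpha$.

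For the sharp comparison \eqref{SharpEstCtrlBis}, I would set $\phi_\alpha = u_\alpha - B_\alpha$, expand $\Delta_g B_\alpha = B_\alpha^3 + \mathcal{R}_\alpha$ in geodesic normal coordinates at $x_\alpha$, where $\mathcal{R}_\alpha$ collects the metric corrections, and write
\begin{equation*}
\Delta_g \phi_\alpha + h_\alpha \phi_\alpha = (u_\alpha^3 - B_\alpha^3) - h_\alpha B_\alpha - \mathcal{R}_\alpha.
\end{equation*}
Representing $\phi_\alpha$ through the Green's function of $\Delta_g + h_\alpha$ on $B_{x_\alpha}(2r_\alpha)$ and plugging in \eqref{SharpEstCtrl} to control the right-hand side yields the bound. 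The boundary contribution at $d_g(x_\alpha,\cdot) = 2r_\alpha$ together with \eqref{eq3.5} produces the $\mu_\alpha r_\alpha^{-2}$ term, while integrating the Green's function (whose pole is of order $d_g^{-2}$ in dimension $4$) against the lower-order terms $h_\alpha B_\alpha$ and $\mathcal{R}_\alpha$ produces the $\mu_\alpha S_\alpha$ term. The main obstacle is the nonlinear region near $x_\alpha$, where $u_\alpha^3$ and $B_\alpha^3$ are both large: here one has to upgrade the profile convergence \eqref{Limualpha} into a uniform $C^0$ error estimate, and this uniform error is precisely what produces the $\varepsilon_\alpha B_\alpha$ correction. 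The $4$-dimensional log effect $\mu_\alpha^2 = o(\mu_\alpha^2\log\mu_\alpha)$ highlighted in the introduction is what ensures that $\varepsilon_\alpha \to 0$.
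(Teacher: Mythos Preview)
The paper does not prove this lemma at all: it is stated with the sentence ``The following sharp estimates, see Druet, Hebey and Robert \cite{DruHebRob} and Druet, Hebey and V\'etois \cite{DruHebVet}, hold true,'' and then used as a black box. Your sketch is a reasonable outline of what those cited references do---reduction to $\Delta_g u_\alpha + h_\alpha u_\alpha = u_\alpha^3$ with bounded potential, a contradiction/rescaling argument combined with Caffarelli--Gidas--Spruck for the weak pointwise bound, and a Green's representation for the sharp comparison with the bubble---so in that sense you are aligned with the paper's approach, which is simply to import the result.

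One point is off, however. Your last sentence misattributes the mechanism: the $4$-dimensional log effect $\mu_\alpha^2 = o(\mu_\alpha^2\ln\mu_\alpha^{-1})$ is \emph{not} what drives $\varepsilon_\alpha \to 0$ in \eqref{SharpEstCtrlBis}. That $\varepsilon_\alpha$ is produced by the rate of convergence in the profile limit \eqref{Limualpha} and the bootstrap of the Green's function estimate; it is a general feature of the isolated blow-up analysis and has analogues in dimension $3$ as well. The log effect enters later, in the proof of Lemma~\ref{LemSharpAsypts}, where it is the reason the Pohozaev balance \eqref{PohoCo1}--\eqref{PohoCo2} yields a nontrivial constraint (the $\mu_\alpha^2\ln(r_\alpha/\mu_\alpha)$ term dominates the $O(\mu_\alpha^2)$ remainders coming from the coupling with $v_\alpha$). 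Keep the two phenomena separate: Lemma~\ref{SharpEst} is dimension-generic sharp asymptotics, while the log effect is what makes the $4$-dimensional Pohozaev argument close.
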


Lemma \ref{SharpEst} provide a sharp control on the $u_\alpha$'s, but we need more to conclude. 
We prove that the following fundamental asymptotic estimate holds true. Lemma \ref{LemSharpAsypts} 
is the key 
estimate we need to prove the a priori bounds in the critical case discussed in this section. 

\begin{lem}\label{LemSharpAsypts} Let $(M,g)$ be a smooth compact Riemannian $4$-dimensional manifold and 
$\bigl((u_\alpha,v_\alpha)\bigr)_\alpha$ be a sequence 
of smooth positive solutions of \eqref{SWSystAlphaCrit} such that \eqref{ContrAssumptSubCptness} holds true. 
Let $(x_\alpha)_\alpha$ and $(\rho_\alpha)_\alpha$ be such that \eqref{hypCdts} holds true. Assume 
\eqref{MainAssumpt}. There holds that $r_\alpha \to 0$ 
as $\alpha \to +\infty$, where $r_\alpha$ is as in \eqref{eq3.3}. Moreover $\rho_\alpha = O\left(r_\alpha\right)$ and
\begin{equation}\label{EqtLemSharpAspt}
r_\alpha^2\mu_\alpha^{-1}u_\alpha\left(\exp_{x_\alpha}(r_\alpha x)\right) \to 
\frac{8}{\vert x\vert^2} + \mathcal{H}(x)
\end{equation}
in $C^2_{loc}\left(B_0(2)\backslash\{0\}\right)$ as $\alpha \to +\infty$, where $\mu_\alpha$ is as in \eqref{DefMualpha}, 
and $\mathcal{H}$ is a harmonic function in $B_0(2)$ which satisfies that $\mathcal{H}(0) \le 0$. 
\end{lem}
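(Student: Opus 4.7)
The plan is to perform a second rescaling at scale $r_\alpha$ and to extract a harmonic limit from the sharp pointwise controls of Lemma \ref{SharpEst}. I would set
\begin{equation*}
w_\alpha(x) = \frac{r_\alpha^2}{\mu_\alpha}\, u_\alpha\!\left(\exp_{\xa}(r_\alpha x)\right)\quad\hbox{and}\quad \tilde g_\alpha(x) = (\exp_{\xa}^\star g)(r_\alpha x)
\end{equation*}
on $B_0(\rho_\alpha/r_\alpha) \subset \mathbb{R}^4$. By \eqref{SWSystAlphaCrit} and \eqref{Defhalpha}, $w_\alpha$ satisfies
\begin{equation*}
\Delta_{\tilde g_\alpha} w_\alpha + r_\alpha^2\, \tilde h_\alpha\, w_\alpha = \left(\frac{\mu_\alpha}{r_\alpha}\right)^2 w_\alpha^3\hskip.1cm ,
\end{equation*}
with $\tilde h_\alpha(x) = h_\alpha(\exp_{\xa}(r_\alpha x))$ uniformly bounded. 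Inserting \eqref{SharpEstCtrl} into the rescaling gives $w_\alpha(x)\le C\vert x\vert^{-2}$ on $B_0(3)\setminus\{0\}$, so $w_\alpha$ is locally uniformly bounded on $B_0(2)\setminus\{0\}$.

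Granted $r_\alpha \to 0$ and $\rho_\alpha = O(r_\alpha)$, the identification of the limit is then standard: $\tilde g_\alpha \to \xi$ in $C^2_{loc}(\mathbb{R}^4)$, both coefficients on the right-hand side of the rescaled equation tend to $0$ on compact subsets of $B_0(2)\setminus\{0\}$, Schauder theory upgrades the $L^\infty$ bound to a $C^{2,\theta}_{loc}$ bound, and along a subsequence $w_\alpha \to H$ in $C^2_{loc}(B_0(2)\setminus\{0\})$ with $\Delta_\xi H = 0$. A direct computation shows $(r_\alpha^2/\mu_\alpha)B_\alpha(\exp_{\xa}(r_\alpha x)) \to 8/\vert x\vert^2$ on compact subsets of $\mathbb{R}^4\setminus\{0\}$, and the rescaling of \eqref{SharpEstCtrlBis} shows that $H - 8/\vert x\vert^2$ is bounded near $0$; by removable singularity it extends to a harmonic function $\mathcal{H}$ on $B_0(2)$, which proves \eqref{EqtLemSharpAspt}.

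The hard part is to establish $r_\alpha \to 0$, $\rho_\alpha = O(r_\alpha)$, and the sign $\mathcal{H}(0)\le 0$; here assumption \eqref{MainAssumpt} and the $4$-dimensional log effect $\mu_\alpha^2 = o(\mu_\alpha^2\log(1/\mu_\alpha))$ enter in a critical way. I would apply a Pohozaev-type identity to the first equation in \eqref{SWSystAlphaCrit} on $B_{\xa}(s_\alpha)$ for a suitably chosen $s_\alpha$ with $r_\alpha\lesssim s_\alpha\lesssim \rho_\alpha$. Expanding $u_\alpha$ around $B_\alpha$ by means of \eqref{SharpEstCtrl}--\eqref{SharpEstCtrlBis}, the bulk side of the identity produces a logarithmic contribution proportional to $\bigl(m_0^2 - \omega^2 - \tfrac16 S_g(\xa)\bigr)\mu_\alpha^2\log(1/\mu_\alpha)$, while the boundary side, once evaluated on the rescaled limit, reduces to an expression linear in $\mathcal{H}(0)$ of strictly smaller order. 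Under \eqref{MainAssumpt} the coefficient in front of the log has a definite sign, which simultaneously forces $r_\alpha \to 0$ and $\rho_\alpha = O(r_\alpha)$ (otherwise the log term would not enter at the right scale) and pins down $\mathcal{H}(0)\le 0$ through the balance of the two sides. Tracking all cancellations in this Pohozaev expansion, and showing that the boundary terms rescaled against the profile $8/\vert x\vert^2+\mathcal{H}$ produce exactly the claimed linear dependence on $\mathcal{H}(0)$, is the main technical hurdle.
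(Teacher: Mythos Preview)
Your high-level architecture---rescale at scale $r_\alpha$, extract a harmonic limit via \eqref{SharpEstCtrl}--\eqref{SharpEstCtrlBis}, then run a Pohozaev identity on $B_{x_\alpha}(r_\alpha)$---matches the paper's. But two genuine gaps remain.

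\medskip\noindent\textbf{The $v_\alpha$-coupling in the Pohozaev expansion.} You expand the bulk as if $h_\alpha$ were a fixed smooth potential. To isolate the coefficient $m_0^2-\omega^2-\tfrac16 S_g(x_0)$ one must write $h_\alpha = (m_0^2-\omega_\alpha^2)+O(v_\alpha)$, and the Pohozaev identity then throws out remainder terms
\[
\int_{B_{x_\alpha}(r_\alpha)} v_\alpha u_\alpha^2\,dv_g
\quad\hbox{and}\quad
\int_{B_{x_\alpha}(r_\alpha)} v_\alpha u_\alpha\,\bigl\vert X_\alpha(\nabla u_\alpha)\bigr\vert\,dv_g\hskip.1cm .
\]
These are \emph{not} controlled by \eqref{SharpEstCtrl}--\eqref{SharpEstCtrlBis} alone: $v_\alpha$ is only known a priori to lie in $[0,1/q]$, and the second equation in \eqref{SWSystAlphaCrit} is itself critical in dimension~$4$. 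The paper handles this with two ingredients you do not mention. First, a maximum-principle argument on $v_\alpha/u_\alpha^\beta$ yields $v_\alpha\le C u_\alpha^\beta$ globally for some small $\beta\in(0,1]$; this makes $v_\alpha$ small on $\partial B_{x_\alpha}(\hat r_\alpha)$. Second, on $B_{x_\alpha}(\hat r_\alpha)$ one splits $v_\alpha=w_{1,\alpha}+w_{2,\alpha}$ into a quasi-harmonic piece with the (now small) boundary data and a zero-boundary piece solving $\Delta_g w_{2,\alpha}+m_1^2 w_{2,\alpha}=q(1-qv_\alpha)u_\alpha^2$, for which Green-function bounds and \eqref{EstTotUAlpha} give a pointwise estimate of the form $\mu_\alpha^2\ln(2+d_g(x_\alpha,\cdot)^2/\mu_\alpha^2)\big/(\mu_\alpha^2+d_g(x_\alpha,\cdot)^2)$. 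Only with these does one obtain $\int v_\alpha u_\alpha^2 = o\bigl(\mu_\alpha^2\ln(1/\mu_\alpha)\bigr)$ (and similarly for the gradient term), which is exactly the order needed for the balance against the log term. Without this, the Pohozaev argument cannot close; the case split $r_\alpha^2\ln(r_\alpha/\mu_\alpha)\to 0$ versus $\ge\delta_0$ in the paper reflects precisely how delicately these remainders must be tracked.

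\medskip\noindent\textbf{The conclusion $\rho_\alpha=O(r_\alpha)$.} This does not emerge from the Pohozaev balance as you suggest. In the paper it is obtained \emph{after} $\mathcal{H}(0)\le 0$ is known, by going back to the definition \eqref{eq3.3}: if $r_\alpha<\rho_\alpha$ then \eqref{eq3.6} forces $(r\varphi_\alpha)'(r_\alpha)=0$, and passing to the limit in the spherical average using \eqref{EqtLemSharpAspt} gives $\mathcal{H}(0)=8$, contradicting $\mathcal{H}(0)\le 0$. Hence in fact $r_\alpha=\rho_\alpha$ for large $\alpha$.
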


\begin{proof}[Proof of Lemma \ref{LemSharpAsypts}] Let $R \ge 6$ be such that $Rr_\alpha \le 6\rho_\alpha$ for $\alpha \gg 1$. 
We assume first that $r_\alpha\to 0$ as $\alpha\to +\infty$. For $x\in B_0(3)$ we define
\begin{eqnarray*}
\tilde{u}_\alpha(x) &=& r_\alpha^2 \mu_\alpha^{-1}u_\alpha\left(\exp_{x_\alpha}\left(r_\alpha x\right)\right)\hskip.1cm,\\
\tilde{g}_\alpha(x)&=& \left(\exp_{x_\alpha}^\star g\right)\left(r_\alpha x\right)\hskip.1cm ,\hskip.1cm\hbox{and}\\
\tilde{h}_\alpha(x)&=& h_\alpha\left(\exp_{x_\alpha}(r_\alpha x)\right)\hskip.1cm,
\end{eqnarray*}
where $h_\alpha$ is as in \eqref{Defhalpha}. 
Since $r_\alpha\to 0$ as $\alpha\to +\infty$, we have that $\tilde{g}_\alpha\to \xi$ in $C^2_{loc}(\mathbb{R}^n)$ as $\alpha\to +\infty$, where $\xi$ is the 
Euclidean metric. Thanks to 
Lemma \ref{SharpEst},
\begin{equation}\label{eq5.1}
\left\vert \tilde{u}_\alpha(x)\right\vert \le C \left\vert x\right\vert^{-2}
\end{equation}
in $B_0(\frac{R}{2})\backslash\{0\}$. By \eqref{SWSystAlphaCrit},
\begin{equation}\label{eq5.2}
\Delta_{\tilde{g}_\alpha}\tilde u_\alpha + r_\alpha^2\tilde{h}_\alpha\tilde{u}_\alpha  
= \left(\frac{\mu_\alpha}{r_\alpha}\right)^2 \tilde{u}_\alpha^3
\end{equation}
in $B_0(\frac{R}{2})$. Thanks to \eqref{eq3.4} and by standard elliptic theory, we then deduce that, after 
passing to a subsequence, 
\begin{equation}\label{eq5.3}
\tilde{u}_\alpha \to \tilde{u}
\end{equation}
in $C^2_{loc}\left(B_0(\frac{R}{2})\backslash\{0\}\right)$ as $\alpha \to +\infty$, where $\mathcal{W}$ satisfies 
$\Delta\tilde{u} = 0$ in $B_0(\frac{R}{2})\backslash\{0\}$ and $\Delta$ is the Euclidean Laplace Beltrami operator. 
Moreover, thanks to \eqref{eq5.1}, we know that 
\begin{equation}\label{eq5.5}
\left\vert\tilde{u}(x)\right\vert \le C \left\vert x\right\vert^{-2}
\end{equation}
in $B_0(\frac{R}{2})\backslash\{0\}$. Thus we can write that 
\begin{equation}\label{eq5.6}
\tilde{u}(x) = \frac{\Lambda}{\left\vert x\right\vert^2}+ \mathcal{H}(x)
\end{equation}
where $\Lambda \ge 0$ and $\mathcal{H}$ satisfies 
$\Delta\mathcal{H}=0$ in $B_0(\frac{R}{2})$. In order to see that $\Lambda = 8$, it is sufficient to integrate \eqref{eq5.2} in $B_0(1)$ to get that 
\begin{equation}\label{Eqt1LS}
-\int_{\partial B_0(1)} \partial_\nu\tilde{u}_\alpha d\sigma_{\tilde{g}_\alpha} 
= \left(\frac{\mu_\alpha}{r_\alpha}\right)^2\int_{B_0(1)}\tilde{u}_\alpha^3 dv_{\tilde{g}_\alpha}
 -r_\alpha^2 \int_{B_0(1)} \tilde{h}_\alpha\tilde{u}_\alpha dv_{\tilde{g}_\alpha}\hskip.1cm .
 \end{equation}
 By \eqref{eq5.1},
\begin{equation}\label{Eqt2LS}
\int_{B_0(1)}\tilde{u}_\alpha dv_{\tilde{g}_\alpha} \le C
\end{equation}
and by changing $x$ into $\frac{\mu_\alpha}{r_\alpha}x$, we can write that
$$\int_{B_0(1)}\tilde{u}_\alpha^3 dv_{g_\alpha} 
= r_\alpha^2\mu_\alpha^{-2}\int_{B_0(\frac{r_\alpha}{\mu_\alpha})}\hat{u}_\alpha^3 dv_{\hat{g}_\alpha}\hskip.1cm ,$$
where $\hat{u}_\alpha(x) = \mu_\alpha u_\alpha\left(\exp_{x_\alpha}(\mu_\alpha x)\right)$ and 
$\hat{g}_\alpha(x) = \left(\exp_{x_\alpha}^\star g\right)(\mu_\alpha x)$. By \eqref{Limualpha} and Lemma \ref{SharpEst}, we then get that
\begin{equation}\label{Eqt3LS}
\lim_{\alpha\to+\infty}\left(\frac{\mu_\alpha}{r_\alpha}\right)^2\int_{B_0(1)}\tilde{u}_\alpha^3 dv_{\tilde{g}_\alpha}
= 16\omega_3\hskip.1cm .
\end{equation}
Noting that by \eqref{eq5.3} and \eqref{eq5.6},
\begin{equation}\label{Eqt4LS}
\lim_{\alpha\to+\infty}\int_{\partial B_0(1)}\partial_\nu\tilde{u}_\alpha d\sigma_{\tilde{g}_\alpha} = - 2\omega_3\Lambda\hskip.1cm ,
\end{equation}
we get that $\Lambda = 8$ 
thanks to \eqref{Eqt2LS}--\eqref{Eqt4LS} by passing into the limit in \eqref{Eqt1LS} as $\alpha\to +\infty$. At this point we 
claim that there exists $\beta \in (0,1]$ and $C > 0$ such that
\begin{equation}\label{FundIneqProofAsy}
v_\alpha \le C u_\alpha^\beta\hskip.2cm\hbox{in}\hskip.1cm M
\end{equation}
for all $\alpha$. Let $x_\alpha \in M$ be a point where $\frac{v_\alpha}{u_\alpha^\beta}$ is maximum. Then, 
$$\frac{\Delta_gv_\alpha(x_\alpha)}{v_\alpha(x_\alpha)} \ge \frac{\Delta_gu_\alpha^\beta(x_\alpha)}{u_\alpha^\beta(x_\alpha)}$$
and it follows from \eqref{SWSystAlphaCrit} that 
\begin{equation}\label{BetaEqt}
\begin{split}
&q\frac{u_\alpha(x_\alpha)^2}{v_\alpha(x_\alpha)} - m_1^2 - q^2u_\alpha(x_\alpha)^2\\
&\ge -\beta(\beta-1)\frac{\vert\nabla u_\alpha(x_\alpha)\vert^2}{u_\alpha(x_\alpha)^2} + \beta u_\alpha(x_\alpha)^2 
- \beta m_0^2 + \beta \omega_\alpha^2\left(qv_\alpha(x_\alpha)-1\right)^2
\hskip.1cm .
\end{split}
\end{equation}
Choosing $\beta \in (0,1]$ such that $m_1^2-\beta m_0^2 > 0$, 
since $0 < v_\alpha \le \frac{1}{q}$, we get that $u_\alpha^\beta(x_\alpha) \ge Cv_\alpha(x_\alpha)$ for some 
$C > 0$ independent of $\alpha$. This proves \eqref{FundIneqProofAsy}. 
In what follows we let $X_\alpha$ be the $1$-form 
given by 
\begin{equation}\label{DefXAlpha}
X_\alpha(x) = \left(1 - \frac{1}{18}\hbox{Rc}_g^\sharp(x).\left(\nabla f_\alpha(x),\nabla f_\alpha(x)\right)\right)\nabla f_\alpha(x)
\hskip.1cm ,
\end{equation}
where $f_\alpha(x) = \frac{1}{2}d_g(x_\alpha,x)^2$, $\hbox{Rc}_g$ is the Ricci curvature of $g$, and $\sharp$ is the musical isomorphism. We 
apply the Pohozaev identity in Druet-Hebey \cite{DruHeb1} with the vector field $X_\alpha$ to $u_\alpha$ in $B_{x_\alpha}(r_\alpha)$. We separate the 
regular part $A_\alpha = m_0^2-\omega_\alpha^2$ from the singular part in $h_\alpha$. Then, $h_\alpha = A_\alpha + O\left(v_\alpha\right)$ and 
we get that
\begin{equation}\label{eq5.7}
\begin{split}
& \int_{B_{x_\alpha}(r_\alpha)}A_\alpha u_\alpha X_\alpha(\nabla u_\alpha)dv_g 
+ \frac{1}{8} \int_{B_{x_\alpha}(r_\alpha)} \left(\Delta_g\hbox{div}_g X_\alpha\right)u_\alpha^2dv_g \\
&+ \frac{1}{4}  \int_{B_{x_\alpha}(r_\alpha)}\left(\hbox{div}_g X_\alpha\right)A_\alpha u_\alpha^2dv_g\\
& = Q_{1,\alpha} + Q_{2,\alpha} + Q_{3,\alpha} + O\left(\int_{B_{x_\alpha}(r_\alpha)}v_\alpha u_\alpha^2dv_g\right)\\
&+ O\left( \int_{B_{x_\alpha}(r_\alpha)}v_\alpha u_\alpha\left\vert X_\alpha(\nabla u_\alpha)\right\vert dv_g\right)
\hskip.1cm ,
\end{split}
\end{equation}
where 
\begin{eqnarray*}
Q_{1,\alpha}&=& \frac{1}{4} \int_{\partial B_{x_\alpha}(r_\alpha)} \left(\hbox{div}_g X_\alpha\right)u_\alpha\partial_\nu u_\alpha d\sigma_g \\
&&- \int_{\partial B_{x_\alpha}(r_\alpha)} \left(\frac{1}{2} X_\alpha(\nu)\vert \nabla u_\alpha\vert^2 - X_\alpha(\nabla u_\alpha)\partial_\nu u_\alpha\right) d\sigma_g\hskip.1cm,
\end{eqnarray*}
$$Q_{2,\alpha} =-\sum_{i=1}^p \int_{B_{x_\alpha}(r_\alpha)} \left(\nabla X_\alpha-\frac{1}{4}\left(\hbox{div}_g X_\alpha\right) g\right)^\sharp
\left(\nabla u_\alpha,\nabla u_\alpha\right) dv_g\hskip.1cm,$$
$$Q_{3,\alpha}= \frac{1}{4}\int_{\partial B_{x_\alpha}(r_\alpha)} X_\alpha\left(\nu\right) u_\alpha^4 d\sigma_g
- \frac{1}{8} \int_{\partial B_{x_\alpha}(r_\alpha)} \left(\partial_\nu \hbox{div}_g X_\alpha\right) u_\alpha^2 d\sigma_g\hskip.1cm,$$
and $\nu$ is the unit outward normal derivative to $B_{x_\alpha}(r_\alpha)$. We have that
\begin{equation}\label{PropXAlpha}
\begin{split}
&\left\vert X_\alpha(x)\right\vert = O\left(d_g(x_\alpha,x)\right)\hskip.2cm,\hskip.2cm 
\hbox{div}_gX_\alpha(x) = n + O\left(d_g(x_\alpha,x)^2\right))\hskip.2cm,\\
&\left\vert\nabla\left(\hbox{div}_gX_\alpha\right)(x)\right\vert = O\left(d_g(x_\alpha,x)\right)\hskip.2cm ,\\
&\hbox{and}\hskip.2cm
\Delta_g\left(\hbox{div}_gX_\alpha\right)(x) = \frac{4}{3}S_g(x_\alpha) + O\left(d_g(x_\alpha,x)\right)
\hskip.1cm .
\end{split}
\end{equation}
Following Druet, Hebey and V\'etois \cite{DruHebVet} we get from Lemma \ref{SharpEst}, \eqref{eq5.7} and \eqref{PropXAlpha} that 
\begin{equation}\label{PohoCo1}
\begin{split}
Q_{1,\alpha} &= -64\omega_3\left(m_0^2-\omega^2-\frac{1}{6}S_g(x_0)\right)\mu_\alpha^2\ln\frac{r_\alpha}{\mu_\alpha}\\
& + o\left(\mu_\alpha^2\ln\frac{1}{\mu_\alpha}\right) + o\left(\mu_\alpha^2r_\alpha^{-2}\right) + O\left(\int_{B_{x_\alpha}(r_\alpha)}v_\alpha u_\alpha^2dv_g\right)\\
&+ O\left( \int_{B_{x_\alpha}(r_\alpha)}v_\alpha u_\alpha\left\vert X_\alpha(\nabla u_\alpha)\right\vert dv_g\right)\hskip.1cm ,
\end{split}
\end{equation}
where $x_\alpha \to x_0$ as $\alpha \to +\infty$. By Lemma \ref{SharpEst} and \eqref{PropXAlpha} there also holds that 
\begin{equation}\label{PohoCo2}
Q_{1,\alpha} = O\left(\mu_\alpha^2r_\alpha^{-2}\right)\hskip.1cm .
\end{equation}
At this point we decompose $v_\alpha$ into a quasi-harmonic part with nonzero Dirichlet boundary condition and a 
quasi-Poisson part with zero Dirichler boundary condition. More precisely, we write that
\begin{equation}\label{DecompVAlpha}
v_\alpha = w_{1,\alpha} + w_{2,\alpha}
\end{equation}
in $B_\alpha = B_{x_\alpha}(\hat r_\alpha)$, where $\hat r_\alpha = \frac{5}{2}r_\alpha$, and $w_{1,\alpha}$, $w_{2,\alpha}$ are given by  
\begin{equation}\label{W1AlphaEqt}
\begin{cases}
\Delta_gw_{1,\alpha} + m_1^2w_{1,\alpha} = 0\hskip.2cm\hbox{in}\hskip.1cm B_\alpha\\
w_{1,\alpha} = v_\alpha\hskip.2cm\hbox{on}\hskip.1cm \partial B_\alpha\hskip.1cm ,
\end{cases}
\end{equation}
and if $W_\alpha = \Delta_gv_\alpha + m_1^2v_\alpha$, by
\begin{equation}\label{W2AlphaEqt}
\begin{cases}
\Delta_gw_{2,\alpha} + m_1^2w_{2,\alpha} = W_\alpha\hskip.2cm\hbox{in}\hskip.1cm B_\alpha\\
w_{2,\alpha} = 0\hskip.2cm\hbox{on}\hskip.1cm \partial B_\alpha\hskip.1cm .
\end{cases}
\end{equation}
Let $G_\alpha$ be the Green's function of $\Delta_g + m_1^2$ in $B_\alpha$ with zero Dirichlet boundary condition on 
$\partial B_\alpha$. By the maximum principle, considering the Green's function on a larger ball of radius $i_g$, we 
obtain by comparison of the two Green's functions that there exists $C > 0$ such that $G_\alpha(x,y) \le Cd_g(x,y)^{-2}$ for all $x\not= y$ in $B_\alpha$. Writing that 
$$w_{2,\alpha}(x) = \int_{B_\alpha}G_\alpha(x,y)W_\alpha(y)dv_g(y)$$
it follows that
\begin{equation}\label{Est1W1Alpha}
\left\vert w_{2,\alpha}(x)\right\vert \le C \int_{B_\alpha}\frac{u_\alpha^2(y)dv_g(y)}{d_g(x,y)^2}\hskip.1cm .
\end{equation}
By \eqref{Limualpha} and Lemma \ref{SharpEst} we can write that
\begin{equation}\label{EstTotUAlpha}
u_\alpha(x) \le \frac{C\mu_\alpha}{\mu_\alpha^2 + d_g(x_\alpha,x)^2}
\end{equation}
in $B_\alpha$. Combining \eqref{Est1W1Alpha} and \eqref{EstTotUAlpha} we then get that
\begin{equation}\label{Est2W1Alpha}
\left\vert w_{2,\alpha}(x)\right\vert \le C\frac{\mu_\alpha^2\ln\left(2 + \frac{d_g(x_\alpha,x)^2}{\mu_\alpha^2}\right)}{\mu_\alpha^2+d_g(x_\alpha,x)^2}
\hskip.1cm .
\end{equation}
Independently, by the maximum principle, the $w_{1,\alpha}$'s satisfy that $0 \le w_{1,\alpha} \le \frac{1}{q}$. 
Let $\hat g_\alpha(x) = \left(\exp_{x_\alpha}^\star g\right)(\hat r_\alpha x)$ and
$\hat w_{1,\alpha}(x) = w_{1,\alpha}\left(\exp_{x_\alpha}(\hat r_\alpha x)\right)$. There holds 
\begin{equation}\label{W1AlphaEqtBis}
\begin{cases}
\Delta_{\hat g_\alpha}\hat w_{1,\alpha} + m_1^2\hat r_\alpha^2\hat w_{1,\alpha} = 0\hskip.2cm\hbox{in}\hskip.1cm B\\
w_{1,\alpha} = \hat v_\alpha\hskip.2cm\hbox{on}\hskip.1cm \partial B\hskip.1cm ,
\end{cases}
\end{equation}
where $B = B_0(1) \subset \mathbb{R}^4$, and $\hat v_\alpha(x) = v_\alpha\left(\exp_{x_\alpha}(\hat r_\alpha x)\right)$. 
At this point we claim that 
\begin{equation}\label{ConvrAlpha}
r_\alpha \to 0
\end{equation}
as $\alpha \to +\infty$. In order to prove \eqref{ConvrAlpha} we proceed by contradiction and assume that $r_\alpha \ge \delta_0 > 0$ for all 
$\alpha \gg 1$.  By 
Lemma \ref{SharpEst} and \eqref{FundIneqProofAsy}, 
\begin{equation}\label{PohoCo4}
v_\alpha \le C\mu_\alpha^\beta\hskip.2cm\hbox{in}\hskip.1cm M\backslash B_{x_\alpha}(r_\alpha)
\hskip.1cm ,
\end{equation}
where $C > 0$ is independent of $\alpha$ since we assumed $r_\alpha \ge \delta_0 > 0$. In particular, $\Vert v_\alpha\Vert_{L^\infty(\partial B_\alpha)} \to 0$ 
as $\alpha \to +\infty$. Then $\Vert\hat v_\alpha\Vert_{L^\infty(\partial B)} \to 0$ 
as $\alpha \to +\infty$, and it follows from the maximum principle and \eqref{W1AlphaEqtBis} that 
$\Vert\hat w_{1,\alpha}\Vert_{L^\infty(B)} \to 0$ 
as $\alpha \to +\infty$. In particular, $\Vert w_{1,\alpha}\Vert_{L^\infty(B_\alpha)} \to 0$ 
as $\alpha \to +\infty$. By \eqref{DecompVAlpha} and \eqref{Est2W1Alpha}, thanks to what we just obtained 
about the $w_{1,\alpha}$'s, we get that $\Vert v_\alpha\Vert_{L^\infty(B_\alpha)} \to 0$ 
as $\alpha \to +\infty$. Then, by Lemma \ref{SharpEst} and \eqref{PropXAlpha} we get that
\begin{equation}\label{PohoCo10}
\begin{split}
&\int_{B_{x_\alpha}(r_\alpha)}v_\alpha u_\alpha^2dv_g = o\left(\mu_\alpha^2\ln\frac{1}{\mu_\alpha}\right)\\
&\int_{B_{x_\alpha}(r_\alpha)} u_\alpha v_\alpha \vert X_\alpha(\nabla u_\alpha)\vert dv_g = o\left(\mu_\alpha^2\ln\frac{1}{\mu_\alpha}\right)
\hskip.1cm .
\end{split}
\end{equation}
and by \eqref{PohoCo1} and \eqref{PohoCo2}, we obtain a contradiction with \eqref{MainAssumpt}. This proves \eqref{ConvrAlpha}.
By \eqref{eq5.3}, \eqref{eq5.5} and \eqref{eq5.6} we get with \eqref{ConvrAlpha} that
\begin{equation}\label{ComputQ1Alpha}
Q_{1,\alpha} = -\left(128\omega_3\mathcal{H}(0) + o(1)\right)\mu_\alpha^2r_\alpha^{-2}
\hskip.1cm .
\end{equation}
Now we distinguish the two cases:

\medskip (i) $r_\alpha^2\ln\frac{r_\alpha}{\mu_\alpha} \to 0$ as $\alpha\to+\infty$, and

\medskip (ii) $r_\alpha^2\ln\frac{r_\alpha}{\mu_\alpha} \ge \delta_0 > 0$ for all $\alpha$.

\medskip\noindent In case (i), since $v_\alpha = O(1)$, we get from Lemma \ref{SharpEst} and \eqref{PropXAlpha} that
\begin{equation}\label{ConcludEqtLemFond1}
\begin{split}
&\int_{B_{x_\alpha}(r_\alpha)}v_\alpha u_\alpha^2dv_g = O\left(\mu_\alpha^2\ln\frac{r_\alpha}{\mu_\alpha}\right)\hskip.1cm ,\\
&\int_{B_{x_\alpha}(r_\alpha)}v_\alpha u_\alpha\vert X_\alpha(\nabla u_\alpha)\vert dv_g = O\left(\mu_\alpha^2\ln\frac{r_\alpha}{\mu_\alpha}\right)
\hskip.1cm .
\end{split}
\end{equation}
Since there also holds that $r_\alpha^2\ln\frac{1}{\mu_\alpha} \to +\infty$ it follows from \eqref{PohoCo1}, \eqref{ComputQ1Alpha} and \eqref{ConcludEqtLemFond1} 
that $\mathcal{H}(0) = 0$. Now we assume (ii). From (ii) we get that $r_\alpha \ge C(\ln\frac{1}{\mu_\alpha})^{-1/2}$ and by \eqref{FundIneqProofAsy} we obtain that 
$$v_\alpha \le C\left(\mu_\alpha\ln\frac{1}{\mu_\alpha}\right)^\beta
\hskip.2cm\hbox{in}\hskip.1cm M\backslash B_{x_\alpha}(r_\alpha)\hskip.1cm .$$
In particular, $\Vert v_\alpha\Vert_{L^\infty(\partial B_\alpha)} \to 0$ 
as $\alpha \to +\infty$. Then $\Vert\hat v_\alpha\Vert_{L^\infty(\partial B)} \to 0$ 
as $\alpha \to +\infty$, and it follows from the maximum principle and \eqref{W1AlphaEqtBis} that 
$\Vert\hat w_{1,\alpha}\Vert_{L^\infty(B)} \to 0$ 
as $\alpha \to +\infty$. In particular, $\Vert w_{1,\alpha}\Vert_{L^\infty(B_\alpha)} \to 0$ 
as $\alpha \to +\infty$ and we get with \eqref{DecompVAlpha}, Lemma \ref{SharpEst}, and \eqref{PropXAlpha}, that 
\begin{equation}\label{ConcludEqtBisLemFond1}
\begin{split}
&\int_{B_{x_\alpha}(r_\alpha)}v_\alpha u_\alpha^2dv_g = \int_{B_{x_\alpha}(r_\alpha)}w_{2,\alpha}u_\alpha^2dv_g 
+ o\left(\mu_\alpha^2\ln\frac{r_\alpha}{\mu_\alpha}\right)\hskip.1cm ,\\
&\int_{B_{x_\alpha}(r_\alpha)}v_\alpha u_\alpha\vert X_\alpha(\nabla u_\alpha)\vert dv_g =\\
&\hskip.4cm \int_{B_{x_\alpha}(r_\alpha)}w_{2,\alpha}u_\alpha\vert X_\alpha(\nabla u_\alpha)\vert dv_g + 
o\left(\mu_\alpha^2\ln\frac{r_\alpha}{\mu_\alpha}\right)
\hskip.1cm .
\end{split}
\end{equation}
There holds,
\begin{equation}\label{PDEEqtW2Alpha}
\Delta_gw_{2,\alpha} + m_1^2w_{2,\alpha} = q\left(1-qv_\alpha\right)u_\alpha^2
\hskip.1cm .
\end{equation}
Let $\eta: \mathbb{R}^n \to \mathbb{R}$ be such that $\eta$ is smooth, $0 \le \eta \le 1$, $\eta = 1$ in $B_0(1)$, and 
$\eta = 0$ in $\mathbb{R}^n\backslash B_0(2)$. We define 
\begin{equation}\label{PohoCo3}
\eta_\alpha(x) = \eta\left(\frac{d_g(x_\alpha,x)}{r_\alpha}\right)
\end{equation}
so that $\eta_\alpha = 1$ in $B_{x_\alpha}(r_\alpha)$ and $\eta_\alpha = 0$ in $M\backslash B_{x_\alpha}(2r_\alpha)$. 
By H\"older's inequalities,
\begin{equation}\label{PohoCo5}
\begin{split}
&\int_{B_{x_\alpha}(r_\alpha)}w_{2,\alpha}u_\alpha^2dv_g \le \left(\int_{B_{x_\alpha}(r_\alpha)}w_{2,\alpha}^4dv_g\right)^{1/4}
\left(\int_{B_{x_\alpha}(r_\alpha)}u_\alpha^{8/3}dv_g\right)^{3/4}\hskip.2cm\hbox{and}\\
&\int_{B_{x_\alpha}(r_\alpha)} u_\alpha w_{2,\alpha}\vert X_\alpha(\nabla u_\alpha)\vert dv_g\\
&\le \left(\int_{B_{x_\alpha}(r_\alpha)}w_{2,\alpha}^4dv_g\right)^{1/4}
\left(\int_{B_{x_\alpha}(r_\alpha)}\vert u_\alpha X_\alpha(\nabla u_\alpha)\vert^{4/3}dv_g\right)^{3/4}\hskip.1cm ,
\end{split}
\end{equation}
while by Lemma \ref{SharpEst} and \eqref{PropXAlpha} there holds that
\begin{equation}\label{PohoCo6}
\begin{split}
&\int_{B_{x_\alpha}(r_\alpha)}u_\alpha^{8/3}dv_g = O\left(\mu_\alpha^{4/3}\right)\hskip.1cm \hbox{and} \\
&\int_{B_{x_\alpha}(r_\alpha)}\vert u_\alpha X_\alpha(\nabla u_\alpha)\vert^{4/3}dv_g = O\left(\mu_\alpha^{4/3}\right)
\hskip.1cm .
\end{split}
\end{equation}
Multiplying \eqref{PDEEqtW2Alpha} by $\eta_\alpha^2w_{2,\alpha}$, and integrating over $M$, we get that 
\begin{equation}\label{PohoCo7}
\int_M\left(\Delta_gw_{2,\alpha}+m_1^2w_{2,\alpha}\right)\eta_\alpha^2w_{2,\alpha}dv_g \le q\int_Mu_\alpha^2w_{2,\alpha}\eta_\alpha^2dv_g
\hskip.1cm .
\end{equation}
By H\"older's and Sobolev inequalities, and by \eqref{PohoCo6}, 
\begin{equation}\label{PohoCo8}
\int_Mu_\alpha^2w_{2,\alpha}\eta_\alpha^2dv_g \le C\mu_\alpha\Vert\eta_\alpha w_{2,\alpha}\Vert_{H^1}
\end{equation}
and it follows from \eqref{PohoCo7} and \eqref{PohoCo8} that
\begin{equation}\label{PohoCo9} 
\Vert\eta_\alpha w_{2,\alpha}\Vert_{H^1}^2 \le \int_M\vert\nabla\eta_\alpha\vert^2w_{2,\alpha}^2dv_g + C\mu_\alpha\Vert\eta_\alpha w_{2,\alpha}\Vert_{H^1}
\hskip.1cm .
\end{equation}
By \eqref{Est2W1Alpha}, since $\vert\nabla\eta_\alpha\vert \le Cr_\alpha^{-1}$, we get that
$$\int_M\vert\nabla\eta_\alpha\vert^2w_{2,\alpha}^2dv_g \le Cr_\alpha^2\left(\frac{\mu_\alpha^2}{r_\alpha^2}\ln\left(\frac{r_\alpha}{\mu_\alpha}\right)\right)^2$$
and by \eqref{eq3.4} it follows that
$$\int_M\vert\nabla\eta_\alpha\vert^2w_{2,\alpha}^2dv_g = o\left(\mu_\alpha^2\ln^2\frac{1}{\mu_\alpha}\right)\hskip.1cm .$$
Coming back to \eqref{PohoCo9}, it follows that
\begin{equation}\label{NormEstimate}
 \Vert\eta_\alpha w_{2,\alpha}\Vert_{H^1} = o\left(\mu_\alpha\ln\frac{1}{\mu_\alpha}\right)\hskip.1cm .
 \end{equation}
 By \eqref{ConcludEqtBisLemFond1}, \eqref{PohoCo5} and \eqref{PohoCo6}, we then get with \eqref{NormEstimate} that 
 \begin{equation}\label{RemainderEstimates}
 \begin{split}
 &\int_{B_{x_\alpha}(r_\alpha)} v_\alpha u_\alpha^2dv_g = o\left(\mu_\alpha^2\ln\frac{1}{\mu_\alpha}\right)\hskip.2cm\hbox{and}\\
 &\int_{B_{x_\alpha}(r_\alpha)}u_\alpha v_\alpha \vert X_\alpha(\nabla u_\alpha)\vert dv_g  = o\left(\mu_\alpha^2\ln\frac{1}{\mu_\alpha}\right)
 \hskip.1cm .
 \end{split}
 \end{equation}
Coming back to \eqref{PohoCo1} and \eqref{ComputQ1Alpha} it follows that 
 \begin{equation}\label{FinalH0}
 \mathcal{H}(0) = \frac{1}{64}\left(m_0^2-\omega^2-\frac{1}{6}S_g(x_0)\right)\lim_{\alpha\to+\infty}r_\alpha^2\ln\frac{r_\alpha}{\mu_\alpha}
 \hskip.1cm .
 \end{equation}
 By \eqref{FinalH0} we get that $\mathcal{H}(0) \le 0$. At this point it remains to prove that $\rho_\alpha = O\left( r_\alpha\right)$. We prove that 
 $\rho_\alpha = r_\alpha$. If not the case, then $r_\alpha < \rho_\alpha$ and we get with \eqref{eq3.6} that $\left(r\varphi(r)\right)^\prime(1) = 0$, where 
 \begin{eqnarray*} \varphi(r) 
 &=& \frac{1}{\omega_3r^3} \int_{\partial B_0(r)}\tilde u d\sigma\\
 &=& \frac{8}{r^2} + \mathcal{H}(0)\hskip.1cm .
\end{eqnarray*}
Hence $\mathcal{H}(0) = 8$ and we get a contradiction with $\mathcal{H}(0) \le 0$. In other words, $\rho_\alpha  = r_\alpha$ for all $\alpha \gg 1$. 
This ends the proof of the lemma.
\end{proof}

Thanks to Lemma \ref{LemSharpAsypts} we can now prove the uniform bounds in 
Theorem \ref{Thm2}. This is the subject of what follows. 

\begin{proof}[Proof of the uniform bounds in Theorem \ref{Thm2}] 
Let $(M,g)$ be a smooth compact Riemannian $4$-dimensional manifold and 
$\bigl((u_\alpha,v_\alpha)\bigr)_\alpha$ be a sequence 
of smooth positive solutions of \eqref{SWSystAlphaCrit} such that 
\eqref{MainAssumpt} holds true. By Druet, Hebey and V\'etois \cite{DruHebVet} 
there exists $C > 0$ such that for any $\alpha$ the following holds true: 
there exist $N_\alpha\in \mathbb{N}^\star$ and $N_\alpha$ critical points of $u_\alpha$, 
denoted by $\left(x_{1,\alpha}, x_{2,\alpha}, \dots, x_{N_\alpha,\alpha}\right)$, such that 
\begin{equation}\label{Eqt1Pr}
d_g\left(x_{i,\alpha},x_{j,\alpha}\right)u_\alpha(x_{i,\alpha}) \ge 1
\end{equation}
for all $i, j \in \left\{1,\dots,N_\alpha\right\}$, $i\neq j$, and 
\begin{equation}\label{Eqt2Pr}
\left(\min_{i=1,\dots,N_\alpha} d_g\left(x_{i,\alpha}, x\right)\right)u_\alpha(x) \le C
\end{equation}
for all $x\in M$ and all $\alpha$. We define
\begin{equation}\label{eqconcl1}
d_\alpha = \min_{1 \le i <   j \le N_\alpha} d_g\left(x_{i,\alpha},x_{j,\alpha}\right)\hskip.1cm.
\end{equation}
If $N_\alpha=1$, we set $d_\alpha= \frac{1}{4}i_g$, where $i_g$ is the injectivity radius of $(M,g)$. We claim that 
\begin{equation}\label{eqconcl2}
d_\alpha\not\to 0
\end{equation}
as $\alpha \to +\infty$. In order to prove this claim, we proceed by contradiction. Assuming on the contrary that 
$d_\alpha\to 0$ as $\alpha\to +\infty$, we see that $N_\alpha\ge 2$ for $\alpha$ large, and we can 
thus assume that the concentration points are ordered in such a way that 
\begin{equation}\label{eqconcl3}
d_\alpha = d_g\left(x_{1,\alpha},x_{2,\alpha}\right) \le d_g\left(x_{1,\alpha},x_{3,\alpha}\right) \le \dots 
\le d_g\left(x_{1,\alpha},x_{N_\alpha,\alpha}\right)\hskip.1cm.
\end{equation}
We set, for $x\in B_0(\delta d_\alpha^{-1})$, $0<\delta<\frac{1}{2}i_g$ fixed, 
\begin{eqnarray*} 
\hat{u}_\alpha(x) &=& d_\alpha u_\alpha \left(\exp_{x_{1,\alpha}}(d_\alpha x)\right)\hskip.1cm,\\
\hat{h}_\alpha (x)&=& h_\alpha\left(\exp_{x_{1,\alpha}}(d_\alpha x)\right)\hskip.1cm ,\hskip.1cm\hbox{and}\\
\hat{g}_\alpha(x)&=& \left(\exp_{x_{1,\alpha}}^\star g\right)(d_\alpha x)\hskip.1cm.
\end{eqnarray*}
It is clear that $\hat{g}_\alpha\to \xi$ in $C^2_{loc}(\mathbb{R}^n)$ as $\alpha\to +\infty$ since 
$d_\alpha\to 0$ as $\alpha\to +\infty$. Thanks to \eqref{SWSystAlphaCrit} we have that 
\begin{equation}\label{eqconcl4}
\Delta_{\hat{g}_\alpha}\hat{u}_\alpha 
+ d_\alpha^2\hat{h}_\alpha\hat{u}_\alpha = \hat{u}_\alpha^3
\end{equation}
in $B_0(\delta d_\alpha^{-1})$, for all $i$. 
For any $R>0$, we also let $1\le N_{R,\alpha}\le N_\alpha$ be such that 
\begin{eqnarray*} 
&&d_g(x_{1,\alpha},x_{i,\alpha}) \le Rd_\alpha\hskip.1cm\hbox{for}\hskip.1cm 1\le i\le N_{R,\alpha}\hskip.1cm ,\hskip.1cm\hbox {and}\\
&&d_g\left(x_{1,\alpha},x_{i,\alpha}\right) > Rd_\alpha \hbox{ for }N_{R,\alpha}+1\le i\le N_{\alpha}\hskip.1cm.
\end{eqnarray*}
Such a $N_{R,\alpha}$ does exist thanks to \eqref{eqconcl3}. We also have that $N_{R,\alpha}\ge 2$ for all $R>1$ and 
that $(N_{R,\alpha})_\alpha$ is uniformly bounded for all $R>0$ thanks to \eqref{eqconcl1}. 
In the sequel, we set 
$$\hat{x}_{i,\alpha} = d_\alpha^{-1} \exp_{x_{1,\alpha}}^{-1}(x_{i,\alpha})$$
for all $1\le i\le N_\alpha$ such that $d_g(x_{1,\alpha}, x_{i,\alpha})\le \frac{1}{2}i_g$. Thanks to \eqref{Eqt2Pr}, for any $R>1$, there exists $C_R>0$ such that 
\begin{equation}\label{eqconcl5}
\sup_{B_0(R)\setminus \bigcup_{i=1}^{N_{2R,\alpha}} B_{\hat{x}_{i,\alpha}}\left(\frac{1}{R}\right)}\hat{u}_\alpha \le C_R\hskip.1cm.
\end{equation}
By the Harnack inequality in Druet, Hebey and V\'etois \cite{DruHebVet}, 
for any $R>1$, there exists $D_R>1$ such that 
\begin{equation}\label{eqconcl6}
\left\Vert \nabla \hat{u}_\alpha\right\Vert_{L^\infty\left(\Omega_{R,\alpha}\right)} 
\le D_R \sup_{\Omega_{R,\alpha}} \hat{u}_\alpha 
\le D_R^2 \inf_{\Omega_{R,\alpha}} \hat{u}_\alpha\hskip.1cm ,
\end{equation}
where
$$\Omega_{R,\alpha}= B_0(R)\setminus \bigcup_{i=1}^{N_{2R,\alpha}} B_{\hat{x}_{i,\alpha}}\left(\frac{1}{R}\right)\hskip.1cm .$$
Assume first that, for some $R>0$, there exists $1\le i\le N_{R,\alpha}$ such that 
\begin{equation}\label{AddedEqtAss1}
\hat{u}_\alpha(\hat{x}_{i,\alpha}) = O(1)\hskip.1cm.
\end{equation}
The two first equations in \eqref{hypCdts} are satisfied by the sequences $x_\alpha = x_{i,\alpha}$ and $\rho_\alpha= \frac{1}{8}d_\alpha$. Then it follows from 
\eqref{Limualpha} that the last equation in \eqref{hypCdts} cannot hold and 
thus that $( \hat{u}_\alpha)_\alpha$ is uniformly bounded in $B_{\hat{x}_{i,\alpha}}(\frac{3}{4})$. 
In particular, by standard elliptic theory, and thanks to \eqref{eqconcl4}, 
$(\hat{u}_\alpha)_\alpha$ is uniformly bounded in $C^1\left(B_{\hat{x}_{i,\alpha}}(\frac{1}{2})\right)$. 
Since, by \eqref{Eqt1Pr}, we have that 
$$\vert \hat{x}_{i,\alpha}\vert^{\frac{n-2}{2}}\vert \hat {u}_\alpha(\hat{x}_{i,\alpha})\vert \ge 1\hskip.1cm,$$
we get the existence of some $\delta_i>0$ such that 
$$\vert \hat{u}_\alpha\vert \ge \frac{1}{2}\vert \hat{x}_{i,\alpha}\vert^{1-\frac{n}{2}} \ge \frac{1}{2} R^{1-\frac{n}{2}}$$
in $B_{\hat{x}_{i,\alpha}}(\delta_i)$. Assume now that, for some $R>0$, there exists $1\le i\le N_{R,\alpha}$ such that 
\begin{equation}\label{AddedEqtAss2}
\vert \hat{u}_\alpha(\hat{x}_{i,\alpha})\vert \to +\infty
\end{equation}
as $\alpha \to +\infty$. Since \eqref{hypCdts} is satisfied by the sequences 
$x_\alpha = x_{i,\alpha}$ and $\rho_\alpha= \frac{1}{8}d_\alpha$, it follows from Lemma \ref{LemSharpAsypts} that the sequence 
$(\vert \hat{u}_\alpha(\hat{x}_{i,\alpha})\vert\times\vert\hat{u}_\alpha\vert)_\alpha$ is uniformly bounded in 
$$\hat{\Omega}_\alpha = B_{\hat{x}_{i,\alpha}}(\tilde{\delta}_i)\backslash B_{\hat{x}_{i,\alpha}}(\frac{\tilde{\delta}_i}{2})$$ 
for some $\tilde{\delta}_i > 0$. 
Thus, using (\ref{eqconcl6}), we can deduce that these two situations are mutually exclusive 
in the sense that either \eqref{AddedEqtAss1} holds true for all $i$ or \eqref{AddedEqtAss2} holds true for all $i$. 
Now we split the conclusion of the 
proof  into two cases. 

\medskip In the first case we assume that there exist $R>0$ and $1\le i \le N_{R,\alpha}$ such that 
$\hat{u}_\alpha(\hat{x}_{i,\alpha}) = O(1)$. Then, thanks to the above discussion, we get that 
$\hat{u}_\alpha(\hat{x}_{j,\alpha}) = O(1)$ 
for all $1\le j \le N_{R,\alpha}$ and all $R>0$. As above, we get that $(\hat{u}_\alpha)_\alpha$ 
is uniformly bounded in $C^1_{loc}(\mathbb{R}^4)$. Thus, by standard elliptic theory, there exists a subsequence of 
$(\hat{u}_\alpha)_\alpha$ which converges in $C^1_{loc}(\mathbb{R}^4)$ to some $\hat{u}$ solution of 
$\Delta \hat{u} = \hat{u}^3$ 
in $\mathbb{R}^4$. By the above discussion, 
$\vert{u}\vert$ possesses at least two critical points, namely $0$ and $\hat{x}_2$, the limit of $\hat{x}_{2,\alpha}$. This is absurd thanks to the 
classification of Caffarelli, Gidas and Spruck \cite{CafGidSpr}. 

\medskip In the second case we assume that there exist $R > 0$ and $1\le i \le N_{R,\alpha}$ such that 
$\vert \hat{u}_\alpha(\hat{x}_{i,\alpha})\vert \to +\infty$ as $\alpha \to +\infty$. Then, thanks to the above discussion,
$\hat{u}_\alpha(\hat{x}_{j,\alpha}) \to +\infty$ 
as $\alpha \to +\infty$, for all $1\le j \le N_{R,\alpha}$ and all $R>0$. By \eqref{eqconcl4} we have that
$$\Delta_{\hat{g}_\alpha}\hat{v}_\alpha 
+ d_\alpha^2\hat{h}_\alpha\hat{v}_\alpha = 
\frac{1}{\hat{u}_\alpha(0)^2}\hat{v}_\alpha^3\hskip.1cm ,$$
where $\hat{v}_\alpha = \hat{u}_\alpha(0)\hat{u}_\alpha$. Applying Lemma \ref{LemSharpAsypts} and 
standard elliptic theory, and thanks to \eqref{eqconcl6} and to 
the above discussion, one easily checks that, after passing to a subsequence, 
$\hat{u}_\alpha(0) \hat{u}_\alpha \to \hat{G}$ 
in $C^1_{loc}\left(\mathbb{R}^n\backslash\{\hat{x}_i\}_{i\in I}\right)$ as $\alpha\to +\infty$, where
$I= \left\{1,\dots, \lim_{R\to +\infty}\lim_{\alpha\to +\infty} N_{R,\alpha}\right\}$
and, for any $R>0$, 
$$\hat{G}(x) = \sum_{i=1}^{\tilde{N}_R} \frac{\Lambda_i}{\vert x-\hat{x}_i\vert^2} + \hat{H}_R(x)$$
in $B_0(R)$, where $2 \le \tilde{N}_R \le N_{2R}$ is such that $\vert \hat{x}_{\tilde{N}_R}\vert \le R$ and $\vert \hat{x}_{\tilde{N}_R +1}\vert > R$, 
where $N_{2R,\alpha} \to N_{2R}$ as $\alpha\to+\infty$, where $\lambda_i > 0$, and where 
$\hat{H}_R$ is a harmonic function in $B_0(R)$. 
Since $\hat{G}\ge 0$, we can write thanks to the maximum principle that, in a neighbourhood of the origin, 
$$\hat{G}(x) =\frac{\Lambda_1}{\vert x\vert^{n-2}} + \hat{H}(x)
\hskip.1cm ,$$
where $\hat{H}(0) \ge \Lambda_2- \Lambda_1R^{-2} - \Lambda_2(R-1)^{-2}$. 
Choosing $R$ large enough, we can ensure that 
$\hat{H}(0)>0$ and this is in contradiction with Lemma  \ref{LemSharpAsypts}.

\medskip By the above discussion we get that \eqref{eqconcl2} holds true. Clearly, this implies that 
$(N_\alpha)_\alpha$ is uniformly bounded. Let $(x_\alpha)_\alpha$ be a sequence of maximal points 
of $u_\alpha$. Thanks to \eqref{ContrAssumptSubCptness} and to \eqref{eqconcl2}, we clearly have that 
\eqref{hypCdts} holds true for the sequences $(x_\alpha)_\alpha$ and 
$\rho_\alpha=\delta$ for some $\delta>0$ fixed. This clearly contradicts Lemma 
\ref{LemSharpAsypts} and thus concludes the proof of the uniform bounds in Theorem \ref{Thm2}. 
\end{proof}

Existence and nonexistence of a priori estimates for 
critical elliptic Schr\"odinger type equations on manifolds 
have been investigated by Berti-Malchiodi \cite{BerMal}, 
Brendle \cite{Bre,BreSur}, 
Brendle and Marques \cite{BreMar}, 
Br\'ezis and Li \cite{BreLi}, 
Druet \cite{Dru1,Dru2}, 
Druet and Hebey \cite{DruHebIMRS, DruHeb0, DruHeb1}, 
Druet, Hebey, and V\'etois \cite{DruHebVet}, 
Druet and Laurain \cite{DruLau}, 
Hebey \cite{Heb1,Heb2}, 
Khuri, Marques and Schoen \cite{KhuMarSch}, 
Li and Zhang \cite{LiZha,LiZha2},
Li and Zhu \cite{LiZhu}, 
Marques \cite{Mar}, 
Micheletti, Pistoia and V\'etois \cite{MicPisVet}, 
Schoen \cite{Sch3,Sch4}, and 
V\'etois \cite{Vet}. In the subcritical case, a priori estimates for Schr\"odinger equations go back   
to the seminal work by Gidas and Spruck \cite{GidSpr}. The above list is not exhaustive.

\medskip\noindent{\bf Acknowledgments:} The first author was partially supported by the ANR grant 
ANR-08-BLAN-0335-01. The authors are indebted to Olivier Druet for a very valuable 
idea which has helped to improve the manuscript. They are also indebted to Valdimir Georgescu and Fr\'ed\'eric Robert 
for several 
interesting discussions on this work.

\end{document}